\documentclass[10pt]{amsart}
\usepackage[T1]{fontenc}
\usepackage{lmodern,amssymb,mathtools,mathrsfs,bm}
\usepackage{microtype,booktabs,graphicx,array,placeins}
\usepackage[hidelinks]{hyperref}
\hypersetup{pdftitle={A merged-cell discontinuous Galerkin method for scalar conservation laws on curved domains},
pdfsubject={Optimal error estimates and explicit RK3 time integration},
pdfkeywords={discontinuous Galerkin, conservation law, cut cell, cell merging, Runge--Kutta}}
\numberwithin{equation}{section}
\newtheorem{theorem}{Theorem}[section]
\newtheorem{lemma}[theorem]{Lemma}

\theoremstyle{definition}
\newtheorem{Def}{Definiton}[section]

\theoremstyle{remark}
\newtheorem{remark}[theorem]{Remark}
\newcommand{\R}{\mathbb R}

\newcommand{\F}{F}
\newcommand{\norm}[1]{\left\|#1\right\|}

\newcommand{\ip}[2]{\left(#1,#2\right)}
\newcommand{\epsh}{\varepsilon_h}
\newcommand{\Ph}{P_h}
\newcommand{\Lh}{L_h}
\newcommand{\Dh}{\mathcal{D}_h}
\newcommand{\Sh}{S_h}
\newcommand{\Fh}{\mathcal F_h}
\newcommand{\Vh}{V_h}
\newcommand{\Th}{\mathcal T_h}

\newcommand{\diam}{\operatorname{diam}}
\newcommand{\conv}{\operatorname{conv}}
\newcommand{\interior}{\operatorname{int}}
\DeclareMathOperator{\dist}{dist}
\newcommand{\cM}{\mathcal{M}_h}
\newcommand{\FF}{\mathbf F}
\newcommand{\jp}[1]{[\![#1]\!]}

\title[Merged-cell DG for conservation laws]{A merged-cell discontinuous Galerkin method for scalar conservation laws on curved domains}
\author{Yong Liu}
\address{ICMSEC, State Key Laboratory of Mathematical Sciences (SKLMS), Academy of Mathematics and Systems Science, and School of Mathematical Science, University of Chinese Academy of Sciences, Chinese Academy of Sciences, Beijing 100190, P.~R.~China.}
\email{yongliu@lsec.cc.ac.cn}
\thanks{Corresponding author. Research is partially supported by the Strategic Priority Research Program of the Chinese Academy of Sciences under the Grant No. XDB0640000, and the NSFC grant 12571395, 12288201, and the Youth Innovation Promotion Association (CAS)}
\date{}
\subjclass[2020]{65M15, 65M60, 65M12}
\keywords{Discontinuous Galerkin method, scalar conservation law, cut cell, cell merging, Runge--Kutta method, optimal error estimate}
\begin{document}
\begin{abstract}
We analyse a discontinuous Galerkin method for two-dimensional scalar
conservation laws on curved domains. Small intersections of a Cartesian
grid with the domain are combined by rectangular cell merging. The local
space is the restriction of $\mathbb Q_k$ to each physical merged cell.
An entropy-conservative flux is supplemented by face viscosity and, for
odd degrees, a skew derivative-jump correction on unmerged interior
squares. We prove exact semidiscrete $L^2$ dissipation and an optimal
error estimate for every fixed $k\ge1$. For smooth solutions with
homogeneous trace on the entire boundary, the semi-discrete method satisfies an $O(h^{k+1})$ error bound.
The constants are independent of the original cut-cell area fractions.
The proof combines a physical-cell $L^2$ projection and cancellation of
regular-face projection errors. A separate semidiscrete result covers linear transport with upwind inflow data. Computations for Burgers' equation on a disk and linear advection on a smooth star exhibit orders close to $k+1$ for
$k=1,2,3$.
\end{abstract}
\maketitle
\section{Introduction}

Discontinuous Galerkin (DG) methods combine local polynomial
approximations with numerical fluxes across cell interfaces. Their
local structure makes them particularly well suited to conservation
laws and explicit time integration; see the review by Cockburn and
Shu~\cite{CockburnShu2001}. For problems posed on curved domains,
unfitted mesh methods offer a geometrically flexible alternative to
traditional body-fitted discretizations. By allowing physical
boundaries and internal interfaces to cut through mesh elements
arbitrarily, these methods avoid the often costly construction of
boundary- and interface-fitted meshes~\cite{B1970Comp,CZ1998NM}.
Over the past several decades, unfitted methods have developed into
a versatile discretization framework for a wide range of applications,
including interface problems~\cite{BE1987IMANA,LL1994SINUM,BBH2009CMAME,BH2012ANM},
fluid dynamics~\cite{P1997JCP,SW2014CMAME},
surface partial differential equations~\cite{DER2014SINUM},
coupled bulk--surface systems~\cite{GO2015M2AN},
and problems on evolving domains~\cite{MZZ2022SINUM}.

Despite these advantages, unfitted finite element methods face a
fundamental challenge: the small-cut-cell problem. Arbitrarily small
cut cells can lead to ill-conditioned mass matrices, unfavorable
constants in inverse estimates, and severe time-step restrictions
for explicit time integration. Two main strategies have been
developed to address this difficulty. The first introduces suitable
stabilization techniques~\cite{B2010MASP,MLLR2014JSC,MLLR2014NM,BH2012ANM,BH2014M2AN,XXW2020CMAME}.
A prominent example is the ghost-penalty stabilization employed in
CutFEM~\cite{BCHLM2015IJNME}. For high-order finite elements, its
commonly used derivative-jump formulation requires the evaluation
of higher-order derivatives on mesh facets. The second strategy
merges small cut cells with neighboring larger
elements~\cite{BMV2018SISC,HWX2017CMAME,JL2013NM,BCDE2021SISC},
thereby forming macro-elements with sufficient support within the
physical domain. Chen and Liu~\cite{ChenLiu2023,CL2024ANM} developed a robust,
automated merging algorithm that efficiently constructs the
two-dimensional macro-elements introduced in~\cite{CLX2021NM}.
Building on this approach, the present paper addresses the
small-cut-cell problem through cell merging while retaining
a high-order error estimate for scalar conservation laws.

Even on fitted meshes, optimal error estimates for DG methods depend
on both the numerical flux and the mesh structure. Classical energy
estimates for linear transport exhibit a half-order loss relative to
the polynomial approximation order~\cite{JohnsonPitkaranta1986},
and quasiuniformity alone does not eliminate this
loss~\cite{Peterson1991}. For linear equations, projections tailored
to upwind-biased fluxes yield optimal error estimates in tensor-product
polynomial spaces on Cartesian meshes~\cite{MengShuWu2016}. Optimal
estimates in total-degree polynomial spaces on uniform
two-dimensional Cartesian meshes were studied by Liu, Shu, and
Zhang~\cite{LiuShuZhang2020} and, more recently, by Cao, Shu, and
Zhang~\cite{CaoShuZhang2026}. These analyses exploit the underlying
mesh structure and, in nonlinear settings, impose assumptions on
the wind direction. Central fluxes, by contrast, exhibit different
cancellation properties, and their accuracy can depend on both
polynomial-degree parity and mesh
uniformity~\cite{LiuShuZhang2021Central}. Extending projection
arguments from uniform square elements to general curved
macro-elements therefore requires additional estimates.

Several DG approaches have been developed to handle embedded and
curved boundaries in hyperbolic problems. Krivodonova and
Berger~\cite{KrivodonovaBerger2006} investigated high-order treatments
of curved solid walls, while Qin and
Krivodonova~\cite{QinKrivodonova2013} combined cell merging with an
explicit DG method for the Euler equations on embedded Cartesian
grids. Ghost-penalty stabilization provides cut-independent estimates
for stationary advection--reaction
problems~\cite{GurkanStickoMassing2020}, and Fu and
Kreiss~\cite{FuKreiss2021} developed high-order time-dependent cut-DG
methods in one dimension. Engwer et al.~\cite{EngwerEtAl2020}
introduced a conservative stabilization for linear transport that
permits time steps governed by the background grid rather than the
small cut cells. Giuliani~\cite{Giuliani2022} developed high-order
state redistribution on curvilinear embedded-boundary grids.
Taylor, Wilcox, and Chan~\cite{TaylorWilcoxChan2025} subsequently
proved that state redistribution can preserve the $L^2$ stability
of a cut-DG wave discretization.

Recent developments also address nonlinear stability. Fu, Kreiss,
and Zahedi~\cite{FuKreissZahedi2024} combined macroelement
reconstruction, stabilization, and limiting to preserve solution
bounds in one dimension. A multidimensional extension for scalar
conservation laws is presented in the
preprint~\cite{FuKreissXinZahedi2026}. Taylor and
Chan~\cite{TaylorChan2026} constructed entropy-stable high-order DG
operators on cut meshes using summation-by-parts formulations and
nonnegative quadrature. Their work distinguishes the entropy stability
of the spatial operator from stability of the small-cut-cell
treatment.

We adopt the rectangular cell-merging construction of Chen and
Liu~\cite{ChenLiu2023}. The mesh remains fixed throughout time
integration, and each physical merged cell carries a single
polynomial in $\mathbb Q_k$. The analysis distinguishes between
two classes of faces. On complete faces shared by congruent
unmerged squares, a Legendre expansion isolates the leading
projection-error contribution. This contribution cancels for
even $k$; for odd $k$, it is canceled by a skew derivative-jump
correction. The remaining faces are confined to a boundary strip
and have uniformly bounded total length. Together, these properties
yield optimal-order weak consistency for the cellwise physical
$L^2$ projection, including on curved cells. The correction makes
no contribution to the discrete energy balance and introduces
no additional unknowns. Cell merging provides uniform inverse
estimates, while the correction supplies the cancellation needed
for optimal accuracy on regular interior faces.

Our main result is the optimal error bound
\[
\max_{0\le t\le T}
\norm{U(t)-u(t)}_{L^2(\Omega)}
\le C_T h^{k+1}
\]
for every fixed $k\ge1$, assuming a smooth scalar solution with
vanishing trace on the entire physical boundary. No sign condition
is imposed on either of the two flux derivatives. We also establish
semidiscrete entropy dissipation and extend
the optimal error estimate to the oscillation-free DG (OFDG)
method~\cite{LuLiuShu2021SINUM}. The analysis assumes exact
integration over the physical geometry, whereas the numerical
experiments employ high-order quadrature with separate refinement
checks. The disk experiment satisfies the boundary assumptions
of the main theorem. The star-shaped-domain experiment tests
an extension to linear transport with an upwind treatment of
inflow boundary conditions, for which we also establish an optimal
semidiscrete error estimate.

The paper is organized as follows.
Section~\ref{sec:geometry} introduces the induced mesh for the
curved computational domain. Section~\ref{sec:method} presents
the merged-cell DG method for scalar conservation laws.
In Section~\ref{sec:spatial}, we establish the superconvergence
properties of the $L^2$ projection on regular faces and prove
$L^2$ stability and an optimal error estimate.
Section~\ref{sec:numerics} presents numerical experiments that
support the theoretical results. Finally,
Section~\ref{sec:concluding} summarizes the main findings and
outlines directions for future work.

\section{Geometry of the merged cells}\label{sec:geometry}

Let $\Omega\Subset B\subset\mathbb R^2$, where $B$ is a rectangle and
$\Gamma=\partial\Omega$ is compact, embedded, and $C^2$.
Start with a uniform Cartesian mesh of side $h$, $\mathcal{T}_h$, in $B$. A Cartesian mesh means the elements of the mesh are rectangles whose sides are parallel to the coordinate axes. The elements intersecting with the boundary are called the boundary elements. From $\Th$ we want to construct an induced mesh $\cM$, which avoids possible small intersections of the boundary with the mesh elements. We denote $\Th^\Gamma:=\{K\in \Th:K\cap \Gamma \neq \emptyset\}$. We recall the definition of large element in Chen et al \cite[Definition 2.1]{CLX2021NM}.

\begin{Def}\label{def:Large-element}
(Large element) An element $K\in \Th$ is called a large element with respect to $\Omega$ if $K\subset\Omega$ or $K\in\Th^\Gamma$ for which there exists a constant $\delta_0\in(0,1/2)$ such that $|e\cap \Omega|\ge \delta_0|e|$ for each side $e$ of $K$ having nonempty intersection with $\Omega$. Otherwise, $K$ is called a small element.
\end{Def}
The rectangular merging construction of Chen and Liu~\cite[Algorithm 6]{ChenLiu2023}
is applied to $\Omega$, on meshes $\mathcal{T}_h$
satisfying the four admissibility rules of \cite[Definition 3.1]{ChenLiu2023}.
Whole background meshes are merged before restriction to $\Omega$;
the exterior subdomain carries no unknowns. We call the merged mesh the induced mesh, which is $\mathcal{M}_h=\{\tilde{K}=M(K)\cap\Omega: M(K) \text{ is the macro-element of $K$}, K\in \Th\}$. Here, we denote $M(K)=K$ if $K$ is unmerged for $K\in \Th$. We denote $\mathcal M_h^\Gamma:=\{\tilde{K}=M(K)\cap\Omega: K\in \Th^\Gamma\}$. We state the
geometric output conditions used in the analysis. The macro-elements $M(K)$
have disjoint interiors and side lengths between $h$ and $C_Mh$.
On each boundary element $\tilde{K}\in \mathcal{M}_h^\Gamma$, the curved boundary is a single simple arc with
exactly two intersections with $\partial M(K)$. Its endpoints lie on
distinct sides of $M(K)$, and the open chord joining them lies in
$\operatorname{int}M(K)$. Assume also that
$\Gamma\cap\partial M(K)=\{B,C\}$, so that there are no additional
boundary contacts. The open line segment $\Gamma_h^K:=(B,C)$ lies in $\interior M(K)$. 
To resolve the geometry of the boundary, we introduce the concept of interface deviation, as introduced in \cite{CLX2021NM}.
\begin{Def}\label{def:2.2}
For any $M_K\in\mathcal{M}^\Gamma$, the interface deviation $\eta_K$ is defined as, if $A_K\in\Omega$ is the vertex of $M_K$ which has the maximum distance to $\Gamma_K^h$ among all vertices of $M_K$ in $\Omega$,
\begin{align}\label{def:interface_deviation}
\eta_K=\frac{\dist_{\rm H}(\Gamma_K,\Gamma_K^h)}{\dist(A_K,\Gamma_K^h)}.    
\end{align}
Here $\dist_{\rm H}(\Gamma_1,\Gamma_2)=\max_{x\in\Gamma_1}(\min_{y\in\Gamma_2}|x-y|)$ and $\dist(A,\Gamma_1)=\min_{y\in\Gamma_2}|A-y|$.
\end{Def}
It is known \cite{CLX2021NM} that $\eta_K\le Ch_K$. Thus, the interface deviation can be arbitrarily small when the mesh is sufficiently refined. The following lemma claims that each large element contains a closed ball of radius $O(h)$, when the interface deviation is sufficiently small.

\begin{lemma}\label{lem:geometry-inball}
For sufficiently small $h$, each physical
cell contains a closed ball of radius $c_bh$, where
\[
 c_b=\frac{\delta_0}{16}.
\]
Moreover, there are constants independent of $h$ and of the
original cut-cell area fractions such that
\begin{equation}\label{eq:conclusions}
 \pi c_b^2h^2\le |K|\le C_M^2h^2,
 \qquad |\partial K|\le Ch.
\end{equation}
More explicitly, the inball conclusion on any cut rectangle holds
whenever
\begin{equation}\label{eq:eta-threshold}
 \eta_K\le\eta_0:=\frac{\delta_0}{32\sqrt2\,C_M}.
\end{equation}
For the fixed compact embedded $C^2$ boundary, this condition is
automatically satisfied by all cut rectangles when $h$ is
sufficiently small.
\end{lemma}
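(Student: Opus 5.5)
We split the physical cells $\tilde K\in\cM$ into interior cells, with $M(K)\subset\Omega$, and cut cells, with $\tilde K\in\cM^\Gamma$. The interior case is immediate. Then $\tilde K=M(K)$ is a rectangle with side lengths in $[h,C_Mh]$, so it contains the ball of radius $h/2\ge c_bh$ about its centre. Its area lies in $[h^2,C_M^2h^2]$ and its perimeter is at most $4C_Mh$. For a cut cell, the upper bounds in \eqref{eq:conclusions} are also immediate: $|\tilde K|\le|M(K)|\le C_M^2h^2$. For the perimeter, $\partial\tilde K$ consists of part of $\partial M(K)$, of length at most $4C_Mh$, together with the arc $\Gamma\cap M(K)$. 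Since $\Gamma$ is $C^2$ and $h$ is small, this arc is a graph over the chord $\Gamma_h^K$ with bounded slope, so its length is at most $C|\Gamma_h^K|\le C\sqrt2C_Mh$. The lower area bound follows from the inball, $|\tilde K|\ge\pi c_b^2h^2$. The real content is therefore the inball for cut rectangles under \eqref{eq:eta-threshold}.

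For a cut cell, let $A=A_K$ be the vertex of $M(K)$ in $\Omega$ farthest from the line through $\Gamma_h^K$, and set $d=\dist(A,\Gamma_h^K)$. \emph{Step 1} is a lower bound $d\ge c\,\delta_0 h$, derived from the large-element property and the output of the merging algorithm. Each side of $M(K)$ meeting $\Omega$ has $|e\cap\Omega|\ge\delta_0|e|\ge\delta_0h$. Take a side $e$ through $A$ that contains an endpoint $B$ of the chord. Then $|AB|\ge\delta_0h$, up to an $O(\eta_K d)$ discrepancy between the arc and the chord intersection points. The chord passes into $\interior M(K)$ and ends on a different side. A trigonometric case analysis, over whether the endpoints lie on adjacent or opposite sides, should then give $d\ge |AB|\sin\theta\ge\delta_0h/\sqrt2$ in the worst configuration, or at least $d\ge\delta_0h/2$. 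This is where the factor $\sqrt2$ in $\eta_0$ is expected to come from. I expect this step to be the main obstacle: the bound must hold uniformly in the chord angle, and one has to choose carefully which vertex and side are used.

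\emph{Step 2} places the ball. Consider the closed triangle (or trapezoid) $T$ bounded by the chord and the sides of $M(K)$ on the side of $A$. It has height $d$ over the chord and legs of length at least $\delta_0 h$, so it contains a ball whose radius is a fixed fraction of $\min(d,\delta_0h)$. Concretely, take the ball centred on the segment from $A$ towards the chord at distance about $d/2$. Its radius should be at least $\delta_0h/8$, after using the right angle at $A$ and the side lengths $\le C_Mh$. \emph{Step 3} removes the discrepancy between the chord and the true arc. By Definition~\ref{def:2.2}, every point of $\Gamma\cap M(K)$ lies within $\eta_Kd$ of $\Gamma_h^K$. Shrink the ball by $\eta_Kd$ and shift it away from the chord by the same amount, so that it avoids $\Gamma$ and stays in $\tilde K$. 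Since $d\le\sqrt2C_Mh$, \eqref{eq:eta-threshold} gives $\eta_Kd\le\delta_0h/32$. The resulting radius is therefore at least $\delta_0h/8-\delta_0h/32-\delta_0h/32\ge\delta_0h/16=c_bh$, and the constants are arranged so that this arithmetic closes. The ball lies on the $\Omega$ side of $\Gamma$ because it is connected, contains points near $A\in\Omega$, and does not meet $\Gamma$.

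Finally, since $\eta_K\le Ch_K\le CC_Mh$ by \cite{CLX2021NM}, condition \eqref{eq:eta-threshold} holds for all cut rectangles once $h\le\eta_0/(CC_M)$. None of the constants involve the original cut-cell area fractions, because only $\delta_0$, $C_M$ and the $C^2$ character of $\Gamma$ enter.
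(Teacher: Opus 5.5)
Your outline has the same architecture as the paper: a lower bound on $d$, a ball in the phase polygon on $A$'s side, shrinking by $\delta_K=\eta_K\dist(A,\Gamma_h^K)\le\eta_K\sqrt2C_Mh\le\delta_0h/32$, then a phase argument. The easy parts are fine: the interior cells, the upper bounds, the area lower bound from the inball, and $\eta_K\le Ch$. However, the two steps that carry the geometry are not established, and the concrete construction in Step~2 fails.

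Let $\sigma$ be the signed distance to the chord line, positive on $A$'s side. Let $e_1,e_2$ be the inward unit directions of the two sides at $A$, and write $\sigma(A+t_1e_1+t_2e_2)=d-\alpha_1t_1-\alpha_2t_2$ with $\alpha_1^2+\alpha_2^2=1$. Your centre $A+\tfrac d2(\alpha_1e_1+\alpha_2e_2)$ is at distance $\tfrac d2\alpha_1$ from the side line $\{A+te_2\}$ and $\tfrac d2\alpha_2$ from $\{A+te_1\}$, and one of these can be arbitrarily small. For example, take $M=[0,h]^2$ with $\Omega$ below an arc from $(0,h/2)$ to $(h,h/2+\epsilon)$; this is an admissible large cell. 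Then $A=(h,0)$, and your centre lies within about $\epsilon/4$ of the side $x=h$. So your radius is not bounded below uniformly in the chord angle. The claim that the polygon contains a ball of radius comparable to $\min(d,\delta_0h)$ is true, but nothing you write proves it.

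Step~1 has the same defect. A side through $A$ containing a chord endpoint need not exist: the chord may cut off the corner opposite $A$, leaving a pentagon. When such a side does exist, $\sin\theta$ relative to it can be small, and you give no criterion for choosing the side. There is also no arc/chord discrepancy at $B$, since arc and chord share their endpoints.

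The missing idea is the triangle at $A$ supplied by the large-element property and convexity. Both $A+\delta_0he_1$ and $A+\delta_0he_2$ lie on $A$'s perimeter path, so $T_A=\conv\{A,A+\delta_0he_1,A+\delta_0he_2\}$ lies in the convex phase polygon. Centre the ball on the bisector of the right angle, $c=A+\tfrac{\delta_0h}{4}(e_1+e_2)$. Then $\overline B(c,\delta_0h/8)\subset T_A\subset\{\sigma\ge0\}$, so $\sigma(c)\ge\delta_0h/8$ whatever the chord angle. The concentric ball of radius $\delta_0h/16$ then satisfies $\sigma\ge\delta_0h/16>\delta_K$, with no shift needed.

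The same triangle gives Step~1 without any case analysis. Maximality of $A$ forces $\alpha_1,\alpha_2\ge0$; $T_A\subset\{\sigma\ge0\}$ forces $d\ge\delta_0h\,\alpha_j$ for $j=1,2$; and $\max_j\alpha_j\ge1/\sqrt2$.

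Your phase argument also needs repair, because the ball need not contain points near $A$. Use instead the convex set $\interior M(K)\cap\{\sigma>\delta_K\}$. It misses $\Gamma$, contains the ball, and contains points arbitrarily close to $A\in\Omega$, because $\sigma(A)=d\ge\delta_0h/\sqrt2>\delta_K$. This is where the lower bound on $d$ is really needed. Finally, the factor $\sqrt2$ in $\eta_0$ comes from $d\le\diam M(K)\le\sqrt2C_Mh$, which you use correctly in Step~3, not from Step~1.
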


\begin{proof}
An uncut physical rectangle contains a closed ball of radius
$h/4$. Since $\delta_0/16<1/4$, it contains the ball
claimed in the lemma. Its area and perimeter satisfy
$h^2\le |K|\le C_M^2h^2$ and $|\partial K|\le4C_Mh$.
It remains to treat cut rectangles. Let $\Omega_1=\Omega$ and $\Omega_2=\mathbb R^2 \setminus \bar{\Omega}_1$. We prove the inball assertion
for either phase and then take $i=1$.

\medskip\noindent
\textit{Step 1: a uniformly sized triangle in each curved polygon.}
Let $M:=M(K)$ for $K\in \Th^\Gamma$. The supporting line $\Gamma_K^h$ divides $M(K)$ into two closed
convex polygons. Denote them by $P_{M,1}$ and $P_{M,2}$, assigning
the labels by the corresponding phase on each of the two
rectangle-perimeter paths from $B$ to $C$. This labeling is
well defined: each open perimeter path misses $\Gamma$, is
connected, and therefore lies in one phase. The simple arc
$\Gamma_K$ is a crosscut of the rectangle, so the two paths
have opposite phase labels.

For a fixed $i$, let $\sigma_i$ be signed distance to $\Gamma_K^h$,
positive in $P_{M,i}$. Thus
\begin{equation}\label{eq:signed}
 P_{M,i}=M\cap\{\sigma_i\ge0\},\qquad
 |\nabla\sigma_i|=1.
\end{equation}
Choose a rectangle vertex $A$ maximizing
$\sigma_i$ over $M$, and write $d=\sigma_i(A)>0$.
This vertex is not $B$ or $C$ and belongs to the phase-$i$
perimeter path; hence $A\in P_{M,i}$.

Let $e_1,e_2$ be the inward unit coordinate directions along the
two rectangle sides issuing from $A$, and set $\ell=\delta_0h$. There is at most one interface endpoint in the
relative interior of either side. Consequently its phase-$i$
portion starting at $A$ has length at least $\delta_0$ times
that side length, by Definition \ref{def:Large-element}, and hence at least
$\ell$. If the side has no interface endpoint, the entire side
belongs to the same perimeter path, apart from possible endpoints.
The corresponding curved-polygon side portions therefore contain
$A+\ell e_1$ and $A+\ell e_2$. Convexity gives
\begin{equation}\label{eq:triangle}
 T_A:=\conv\{A,A+\ell e_1,A+\ell e_2\}
 \subset P_{M,i}.
\end{equation}

Define
\begin{equation}\label{eq:center}
 c=A+\frac{\ell}{4}(e_1+e_2),\qquad r_0=\frac{\ell}{8}.
\end{equation}
In coordinates $x=A+t_1e_1+t_2e_2$, the triangle is
$t_1\ge0$, $t_2\ge0$, $t_1+t_2\le\ell$.
The distances from $c$ to its three supporting lines are
\[
 \frac{\ell}{4},\qquad \frac{\ell}{4},\qquad
 \frac{\ell-\ell/4-\ell/4}{\sqrt2}
 =\frac{\ell}{2\sqrt2}.
\]
All exceed $r_0$. Since the rectangle side lengths are at least
$h\ge\ell$, we conclude that
\begin{equation}\label{eq:polygon-ball}
 \overline B(c,r_0)\subset T_A\cap\interior M,
 \qquad \sigma_i(c)\ge r_0.
\end{equation}
The last inequality follows because the ball lies in the
half-plane $\{\sigma_i\ge0\}$ and $\sigma_i$ is signed distance.

We also record a lower bound for the denominator in
\eqref{def:interface_deviation}. Maximality of $A$ implies
\[
 \sigma_i(A+t_1e_1+t_2e_2)=d-\alpha_1t_1-\alpha_2t_2,
 \qquad \alpha_1,\alpha_2\ge0,\qquad
 \alpha_1^2+\alpha_2^2=1.
\]
By \eqref{eq:triangle}, $d-\ell\alpha_j\ge0$ for $j=1,2$.
Therefore
\begin{equation}\label{eq:denominator}
 \frac{\delta_0h}{\sqrt2}
 \le \ell\max(\alpha_1,\alpha_2)
 \le d=\dist(A,\Gamma_K^h)
 \le\diam M\le\sqrt2\,C_Mh.
\end{equation}
In particular the deviation is well defined. Notice that only an
inequality, not equality, is used between the line and segment
distances.

\medskip\noindent
\textit{Step 2: interface deviation gives an inner polygon in the true phase.}
By \eqref{def:interface_deviation} and the upper bound in
\eqref{eq:denominator}, condition \eqref{eq:eta-threshold} gives
\begin{equation}\label{eq:delta-small}
 \delta_K=\eta_{K} \dist(A,\Gamma_K^h)
 \le\eta_K\sqrt2\,C_Mh
 \le\frac{\delta_0h}{32}=\frac{\ell}{32}.
\end{equation}
For every $x\in\Gamma_K$, \eqref{def:interface_deviation} yields
\begin{equation}\label{eq:strip}
 |\sigma_i(x)|=\dist(x,\Gamma_K^h)\le\delta_K.
\end{equation}
Consider the open inward-shifted polygon
\begin{equation}\label{eq:inner-polygon}
 U_{M,i}=\interior M\cap\{x:\sigma_i(x)>\delta_K\}.
\end{equation}
The set $U_{M,i}$ is convex. It misses $\Gamma$ by
\eqref{eq:strip}. Moreover, \eqref{eq:denominator} and
\eqref{eq:delta-small} give $\sigma_i(A)>\delta_K$.
Since $A\in\Omega_i$ and $\Omega_i$ is open, for all sufficiently
small $t>0$ the point
\[
 y_t=A+t(e_1+e_2)
\]
belongs to $U_{M,i}$.
For any $x\in U_{M,i}$, the segment from $y_t$ to $x$ stays in
$U_{M,i}$ and cannot meet $\Gamma$.
The sets $\Omega_1$ and $\Omega_2$ are disjoint open sets whose
union is $\R^2\setminus\Gamma$, so a connected segment avoiding
$\Gamma$ cannot change phase. Thus
\begin{equation}\label{eq:inner-inclusion}
 U_{M,i}\subset\Omega_i.
\end{equation}

\medskip\noindent
\textit{Step 3: the physical inball.}
If $x\in\overline B(c,r_0/2)$, then
\eqref{eq:polygon-ball} and the unit Lipschitz constant of
$\sigma_i$ imply
\begin{align*}
 \sigma_i(x)
 &\ge\sigma_i(c)-|x-c|\ge r_0-r_0/2
   =\frac{\ell}{16}>\frac{\ell}{32}\ge\delta_K.
\end{align*}
Also $x\in\interior M$. Combining this with
\eqref{eq:inner-inclusion} proves
\begin{equation}\label{eq:physical-ball}
 \overline B\left(c,\frac{\delta_0h}{16}\right)
 \subset U_{M,i}\subset\interior M\cap\Omega_i.
\end{equation}
This proves the claimed radius without assuming that the physical
cell $\tilde{K}$ is convex or star-shaped.

\medskip\noindent
\textit{Step 4: area and boundary length.}
For $i=1$, \eqref{eq:physical-ball} and $K\subset M$ give
\[
 \pi\left(\frac{\delta_0}{16}\right)^2h^2
 \le |\tilde{K}|\le |M|\le C_M^2h^2.
\]
We assume the arc $\Gamma_K$ has the parameter representation
\[
 \Gamma_K=\{(t,q(t)):t_0\le t\le t_1\},\qquad t_0<t_1.
\] 
The interface arclength obeys
\begin{align*}
 |\Gamma_K|
 &=\int_{t_0}^{t_1}\sqrt{1+|q'(t)|^2}\,dt
 \le\sqrt2(t_1-t_0)\le\sqrt2\,\diam M\le2C_Mh.
\end{align*}
Since $\partial \tilde{K}\subset\partial M\cup \Gamma_K$, up to the
endpoints of zero length,
\[
 |\partial \tilde{K}|\le|\partial M|+|\Gamma_K|
 \le4C_Mh+2C_Mh=6C_Mh.
\]
This proves \eqref{eq:conclusions} and completes the proof.
\end{proof}

Constants of all estimates below may depend on the fixed
geometry, $\delta_0$, $C_M$, $\eta_0$, and the fixed polynomial degree, but not
on the position or area fraction of an original cut cell. The following curved domain inverse estimate is proved in \cite[Lemma 2.4]{CLX2021NM}
\begin{lemma}\label{lem:geometry-inverse}
For $p\in\mathbb Q_k(M(K))$ and a multi-index $\alpha$,
\begin{equation}\label{eq:geometry-inverse-maximum}
 \|\partial^\alpha p\|_{L^\infty(\tilde{K})}
 \le C_{k,\alpha}h^{-1-|\alpha|}\|p\|_{L^2(\tilde{K})},
\end{equation}
where $\tilde{K}=M(K)\cap \Omega$.
In particular,
\begin{equation}\label{eq:geometry-inverse}
 \|\nabla p\|_{L^2(\tilde{K})}\le Ch^{-1}\|p\|_{L^2(\tilde{K})},\qquad
 \|p\|_{L^2(\partial \tilde{K})}\le Ch^{-1/2}\|p\|_{L^2(\tilde{K})}.
\end{equation}
\end{lemma}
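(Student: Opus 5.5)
The plan is to reduce to a reference configuration by scaling, and then combine finite-dimensionality of $\mathbb Q_k$ with the inball of Lemma~\ref{lem:geometry-inball}. Translate and dilate so that $M(K)$ becomes a rectangle $\hat M$ with side lengths in $[1,C_M]$. By Lemma~\ref{lem:geometry-inball}, the physical cell $\tilde K$ then contains a closed ball $\hat B$ of radius $c_b$, and $\hat B\subset\hat M$. Under the dilation by $h$, $\|\partial^\alpha p\|_{L^\infty}$ scales by $h^{-|\alpha|}$ and $\|p\|_{L^2}$ by $h$. It therefore suffices to prove
\[
\|\partial^\alpha \hat p\|_{L^\infty(\hat M)}\le C\|\hat p\|_{L^2(\hat B)}
\]
with a constant uniform over all such rectangles and balls. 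The key point is that the ball, not the possibly tiny original cut fraction, controls the norm. The right-hand side is at most $C\|\hat p\|_{L^2(\tilde K)}$ because $\hat B\subset\tilde K$, and the left-hand side dominates the $L^\infty(\tilde K)$ norm.

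The uniform constant comes from a compactness argument. On the finite-dimensional space $\mathbb Q_k$, both $q\mapsto\|\partial^\alpha q\|_{L^\infty(\hat M)}$ and $q\mapsto\|q\|_{L^2(\hat B)}$ are seminorms, and the second is a norm, since a polynomial vanishing on an open ball vanishes identically. Hence their ratio is bounded for each fixed configuration. Rather than tracking how it varies, I would avoid this by enlarging the domain: take the square $Q$ of side $2\sqrt2\,C_M$ centred at the ball's centre, which contains $\hat M$. Then translate so that $\hat B$ is centred at the origin. After this, the quantity $\sup_{q\in\mathbb Q_k\setminus\{0\}}\|\partial^\alpha q\|_{L^\infty(Q)}/\|q\|_{L^2(B(0,c_b))}$ depends only on $k$, $\alpha$, $C_M$, and $\delta_0$. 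It is finite by equivalence of norms on $\mathbb Q_k$, and $\mathbb Q_k$ is invariant under translation. This yields \eqref{eq:geometry-inverse-maximum} with the stated $h^{-1-|\alpha|}$ factor.

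The two corollaries in \eqref{eq:geometry-inverse} follow by integrating the pointwise bounds. First,
\[
\|\nabla p\|_{L^2(\tilde K)}\le|\tilde K|^{1/2}\|\nabla p\|_{L^\infty(\tilde K)}\le C h\cdot h^{-2}\|p\|_{L^2(\tilde K)},
\]
using $|\tilde K|\le C_M^2h^2$ from \eqref{eq:conclusions}. Second,
\[
\|p\|_{L^2(\partial\tilde K)}\le|\partial\tilde K|^{1/2}\|p\|_{L^\infty(\tilde K)}\le Ch^{1/2}h^{-1}\|p\|_{L^2(\tilde K)},
\]
using $|\partial\tilde K|\le Ch$ and the fact that $p$ is continuous up to the boundary.

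The only delicate step is ensuring the constant does not depend on the shape of $\tilde K$. The ball-plus-bounding-square reduction handles this, since the constant sees only $c_b$ and $C_M$ and never the geometry of the arc or the cut fraction.
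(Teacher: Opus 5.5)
Your proof is correct, but it takes a different route from the paper. The paper gives no argument of its own for this lemma. It imports the estimate from \cite[Lemma 2.4]{CLX2021NM}, a degree-explicit ($hp$-type) domain inverse estimate established there from the large-element and interface-deviation conditions.

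You instead derive the lemma from this paper's own Lemma~\ref{lem:geometry-inball}. After translating and dilating by $h$, the physical cell contains a ball of fixed radius $c_b=\delta_0/16$ inside a box of fixed size. Norm equivalence on the finite-dimensional space $\mathbb Q_k$, which is invariant under translations and isotropic dilations, then gives a constant depending only on $k$, $\alpha$, $C_M$ and $\delta_0$. Integrating the pointwise bound against $|\tilde K|\le C_M^2h^2$ and $|\partial\tilde K|\le Ch$ yields both estimates in \eqref{eq:geometry-inverse}.

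What your route buys:
\begin{itemize}
\item It is self-contained and ties the inverse estimate to the inball property the paper already proves, so it is visible that the inball is the only geometric input.
\item The interface-deviation threshold $\eta_0$ enters only through the smallness condition of Lemma~\ref{lem:geometry-inball}, not through the constant.
\item It gives the $L^\infty$ bound for every multi-index $\alpha$, on all of $M(K)$. This is the form actually used later, for example in the differentiated bound in Lemma~\ref{lem:projection}.
\end{itemize}

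What it gives up is any explicit dependence of $C_{k,\alpha}$ on the degree. A compactness constant here must in fact grow exponentially in $k$, because the ball is small relative to the box; a Chebyshev polynomial in one variable shows this. That is harmless in this paper, since $k$ is fixed, but controlling that dependence is exactly the purpose of a degree-explicit estimate like the cited one.
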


Let $E_h=\bigcup_{K\in\mathcal M_h^\Gamma}K$ be the exceptional region. The regular faces $\Fh^r$
are full sides shared by two original uncut, unmerged squares;
the other interior exceptional faces form $\Fh^e$ and physical boundary faces form $\Fh^b$. Let $\Fh$ denote all interior and physical boundary
faces, that is $\Fh=\Fh^r\cup \Fh^e \cup \Fh^b$.

\begin{lemma}\label{lem:geometry-strip}
The exceptional region and its faces satisfy
\begin{equation}\label{eq:geometry-strip}
 |E_h|\le Ch,\qquad \#\mathcal M_h^\Gamma\le Ch^{-1},\qquad
 \sum_{F\in\Fh^e}|F|+|\Gamma|\le C.
\end{equation}
Furthermore $\sum_{F\in\Fh}|F|\le Ch^{-1}$ and, for every function $v$
whose restriction to each cell belongs to $\mathbb Q_k$,
\begin{equation}\label{eq:geometry-summed-trace}
 \sum_{K\in\cM}\|v\|_{L^2(\partial K)}^2
 \le Ch^{-1}\|v\|^2.
\end{equation}
The same conclusions hold after marking any fixed number of
additional neighboring layers as exceptional.
\end{lemma}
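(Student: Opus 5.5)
The argument rests on one tubular-neighbourhood fact. Every cell of $\mathcal M_h^\Gamma$ is the physical part of a macro-element $M(K)$ with $K\in\Th^\Gamma$. The macro-element has diameter at most $\sqrt2 C_Mh$ and meets $\Gamma$, so every cell of $\mathcal M_h^\Gamma$ lies in the tube
\[
 N_h:=\{x:\dist(x,\Gamma)\le\sqrt2C_Mh\}.
\]
Since $\Gamma$ is compact and $C^2$, for $h$ small the tube has area $|N_h|\le C|\Gamma|h$; this is the standard tube/Steiner estimate, valid below the reach of $\Gamma$. That gives $|E_h|\le Ch$ at once. The macro-elements have disjoint interiors, and by Lemma~\ref{lem:geometry-inball} each physical cell has area at least $\pi c_b^2h^2$. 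Summing these areas inside $N_h$ gives $\#\mathcal M_h^\Gamma\le Ch/(c_b^2h^2)=Ch^{-1}$.

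For the faces, every $F\in\Fh^e$ lies on $\partial K$ for some $K$ that is either a cell of $\mathcal M_h^\Gamma$ or an unmerged uncut square sharing a side with such a cell. We charge $F$ to an adjacent exceptional cell. I read the definition so that a face touching an exceptional cell is exceptional, and a face between two uncut unmerged squares that lies only partly along a merged macro-element side is still charged to that macro-element. Each exceptional cell receives charges of total length at most $|\partial M(K)|\le 4C_Mh$, and the number of exceptional cells is at most $Ch^{-1}$, so $\sum_{F\in\Fh^e}|F|\le C$. The term $|\Gamma|$ is a fixed constant. Likewise $\#\cM\le |B|/(\pi c_b^2h^2)\le Ch^{-2}$ and $|\partial K|\le Ch$ from \eqref{eq:conclusions}, so $\sum_{F\in\Fh}|F|\le \sum_K|\partial K|\le Ch^{-1}$.

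The summed trace estimate \eqref{eq:geometry-summed-trace} follows cellwise. Apply the second inequality of \eqref{eq:geometry-inverse} from Lemma~\ref{lem:geometry-inverse} to $p=v|_K$, extended as the same $\mathbb Q_k$ polynomial to $M(K)$, to get $\|v\|_{L^2(\partial K)}^2\le Ch^{-1}\|v\|_{L^2(K)}^2$. Summing over the disjoint cells gives the claim. For an uncut square this is just the standard scaled trace inequality.

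For the final sentence, marking $m$ additional neighbouring layers replaces the tube radius $\sqrt2C_Mh$ by $(\sqrt2+2m)C_Mh$. The tube bound, the counting argument and the face charging then go through unchanged, with constants depending on $m$.

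The one step that needs care is the tube area bound. If the cleaner route via reach is unavailable, I would cover $\Gamma$ by $O(|\Gamma|/h)$ balls of radius $h$ centred on $\Gamma$, which is possible because $\Gamma$ is rectifiable with finite length. The concentric balls of radius $(\sqrt2C_M+1)h$ then cover $N_h$, giving $|N_h|\le Ch$ directly without any curvature requirement.
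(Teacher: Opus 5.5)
Your proof is correct and follows essentially the same route as the paper. You place $E_h$ in an $O(h)$-width tube about $\Gamma$; the paper's normal parametrization with Jacobian $1-t\kappa(s)$ is the same reach/Steiner bound you invoke, and your ball-covering fallback is a valid curvature-free alternative. You then count exceptional cells via disjoint interiors and the inball area bound, sum perimeters for both face-length estimates, sum the cellwise inverse trace inequality \eqref{eq:geometry-inverse}, and widen the tube for additional layers, all exactly as the paper does.
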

\begin{proof}
Local merging and the diameter bound place $E_h$ in a tube of
width $Rh$ about $\Gamma$, with fixed $R$. Compact embedded
$C^2$ regularity gives an injective normal parametrization in a
fixed tube; its Jacobian in arclength coordinates is
$1-t\kappa(s)$. Hence this tube has area at most $Ch$.
Disjoint cell interiors and the lower area bound yield
$\#\mathcal M_h^\Gamma\le Ch^{-1}$. Summing cell perimeters gives the
exceptional-face estimate. Similarly, $\#\mathcal{M}_h\le Ch^{-2}$ and
$|\partial K|\le Ch$ bound the whole skeleton. Summing
\eqref{eq:geometry-inverse} proves
\eqref{eq:geometry-summed-trace}. Face subdivision partitions
arclength and does not multiply any trace integral; an interior
face occurs twice and a physical boundary face once in the
cell-boundary sum. Adding finitely many neighboring layers only
increases the fixed tube width.
\end{proof}
\section{The method and the main result}\label{sec:method}
We consider the scalar conservation law on $\Omega$ with curved $C^2$ boundary,
\begin{equation}\label{eq:pde}
 u_t+\nabla\cdot \FF(u)=0,\qquad \FF=(f,g),\qquad u(0)=u_0.
\end{equation}
We consider the homegeous boundary condtion,
\begin{equation}\label{eq:boundary}
 u|_{\partial\Omega}=0\quad(0\le t\le T)
\end{equation}
This is a compatibility condition on a classical solution, stronger
than an inflow prescription. It does not require that $u$ vanish in
a boundary neighborhood. We assume a $C^{k+7}$ space--time extension
of $u$ to a fixed neighborhood of $\overline\Omega\times[0,T]$ and
$\FF\in C^{k+7}(\R;\R^2)$. All results concern this smooth time interval.
The degree $k\ge1$ is fixed. Write $\ip{v}{w}=(v,w)_{L^2(\Omega)}$, $\norm{v}=\norm{v}_{L^2(\Omega)}$,
and $\norm{v}_E=\norm{v}_{L^2(E)}$. On the mesh of
Section~\ref{sec:geometry}, define
\[
 \Vh^k=\{v\in L^2(\Omega):v|_{\tilde{K}}\in\mathbb Q_k(M(K))|_{\tilde{K}},
              \ \tilde{K}=M(K)\cap \Omega \in\mathcal M_h \text{ for some } K \in \Th\}.
\]
Here $\mathbb Q_k$ has degree at most $k$ in each Cartesian coordinate.
The projection $\Ph$ is $ L^2$-orthogonal in the actual cell $\tilde{K}\in \cM$, rather than in its containing rectangle, that is $P_hw\in V_h^k$ such that
\begin{align}\label{L2-projection}
    \int_{\tilde{K}}(P_hw-w)v\, dx=0\quad \forall v\in \mathbb Q_k(M(K)),\, \tilde{K}\in \cM.
\end{align}

\subsection{Fluxes and the skew correction}
For any $F\in \mathcal F_h$, we fix a unit normal vector $\mathbf{n}_F$ of $F$ with the convention that $\mathbf{n}_F$ is the unit outer normal to $\partial \Omega$ if $F\subset \Gamma$, and
write $\jp{v}=v^- -v^+$ and $\{v\}=(v^-+v^+)/2$, where $v^{\pm}(\mathbf{x}):=\lim_{\varepsilon\rightarrow 0^+}v(\mathbf{x}\pm\varepsilon \mathbf{n}_F)$. For a fixed $\nu_0>0$, set $\nu_F=\nu_0h$ on $\Fh^r$ and
$\nu_F=\nu_0$ elsewhere, including on the physical boundary. Define
\begin{align}
 \FF^{\rm ec}(a,b)&=\int_0^1 \FF((1-s)a+sb)\,ds,
       \label{eq:ec-flux}\\
 \widehat \F_{\mathbf{n}_F,F}(a,b)&=\FF^{\rm ec}(a,b)\cdot \mathbf{n}_F
                         +\tfrac12\nu_F(a-b).
       \label{eq:num-flux}
\end{align}
The state integral and all physical volume and face integrals are exact
in the analysis.
Put $c_k=(1+(-1)^{k+1})k!/[4(2k+1)!]$ and
\begin{equation}\label{eq:skew}
 \begin{split}
 \Sh(w,v)&=c_kh^k\sum_{F\in\Fh^r}\int_F a_F(w)
 \bigl(\jp{\partial_{\mathbf{n}_F}^kw}\jp{v}-\jp{w}\jp{\partial_{\mathbf{n}_F}^kv}\bigr),\\
 a_F(w)&=\FF'(\{w\})\cdot \mathbf{n}_F.
 \end{split}
\end{equation}
Note that $\Sh=0$ for even $k$. Both derivative traces use the same oriented
normal $\mathbf{n}_F$. For odd $k$, exchanging the cells and reversing the
normal leaves \eqref{eq:skew} unchanged. Indeed, $\jp{w}$ and $a_F$ change
sign, whereas $\jp{\partial_{\mathbf{n}_F}^kw}$ does not. For every $w$,
$\Sh(w,w)=0$.

The spatial DG operator $\Lh:\Vh^k\to\Vh^k$ is defined by
\begin{equation}\label{eq:operator}
 \begin{split}
 \ip{\Lh(w)}{v}={}&\sum_{K\in\cM}(\FF(w),\nabla v)_K
 -\sum_{\F\in\Fh^r\cup\Fh^e}\int_F
               \widehat \F_{\mathbf{n}_F,F}(w^-,w^+)\jp{v}\\
 &-\sum_{F\in \Fh^b}\int_{F}\widehat \F_{\mathbf{n}_F,F}(w,0)v+\Sh(w,v).
 \end{split}
\end{equation}
The semidiscrete method is find $U\in V_h$, such that $U_t=\Lh(U)$, $U(0)=\Ph u_0$.
The mass matrices are positive definite by
Lemma~\ref{lem:geometry-inball}. Define
\begin{equation}\label{eq:dissipation}
 \Dh(v)=\sum_{F\in\Fh^r\cup\Fh^e}\nu_F\norm{\jp{v}}_F^2
                         +\nu_0\norm{v}_{\partial\Omega}^2.
\end{equation}
The particular scaling on regular faces balances their corrected
projection residual with $\Dh$.

The method is locally conservative. Testing with the cell indicator
$\chi_K$ makes its volume gradient and derivative jumps vanish.
On an odd-degree regular face the effective oriented flux is
\[
  H_F(w)=\widehat \F_{\mathbf{n}_F,F}(w^-,w^+)
                 -c_kh^ka_F(w)\jp{\partial_{\mathbf{n}_F}^kw};
\]
on the other faces it is $\widehat F_{\mathbf{n}_F,F}$. If
$\epsilon_{K,F}=1$ for the minus cell and $-1$ for the plus cell, then
\[
 \frac{d}{dt}\int_K U
 =-\sum_{F\subset\partial K\setminus\partial\Omega}
                  \epsilon_{K,F}\int_FH_F(U)
   -\int_{\partial K\cap\partial\Omega}\widehat F_{\mathbf{n}_F,F}(U,0).
\]
Interior contributions cancel in the sum over cells. Both jumps in
$\Sh$ vanish for a common smooth exact state, so the correction is
consistent as well as conservative.

\subsection{Error and stability}
The following Theorem is our main result; we have $L^2$-stability and an optimal a priori error estimate for the semi-discrete scheme.

\begin{theorem}\label{thm:main}
Assume the mesh conditions in Section~\ref{sec:geometry} and the
regularity and boundary conditions above. For every fixed $k\ge1$,
the semidiscrete solution initialized by $\Ph u_0$ satisfies
\begin{align}
 &\norm{U(t)}^2+\int_0^t\Dh(U(s))\,ds=\norm{\Ph u_0}^2,
 \label{eq:semidiscrete-energy}\\
 &\sup_{0\leq t\leq T}\norm{U(t)-u(t)}\leq C_T h^{k+1},
 \qquad \int_0^T\Dh(U-\Ph u)\,dt\leq C_T h^{2k+2}.
 \label{eq:semidiscrete-error}
\end{align}
\end{theorem}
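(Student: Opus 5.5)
For the energy identity \eqref{eq:semidiscrete-energy}, I would test \eqref{eq:operator} with $v=U$ and use $\Sh(U,U)=0$. With the entropy potential $\psi(u)=\int_0^u \FF(s)\,ds$, the volume term $(\FF(U),\nabla U)_K$ equals $\int_{\partial K}\psi(U)\cdot\mathbf n$ by the divergence theorem. For the flux $\FF^{\rm ec}$ one has the identity
\[
\FF^{\rm ec}(a,b)(a-b)=\psi(a)-\psi(b),
\]
so the central part of each interior face term cancels the corresponding volume boundary contributions. This leaves only $-\tfrac12\nu_F\|\jp{U}\|_F^2$. On $\partial\Omega$ the same identity with $b=0$ (where $\psi(0)=0$) cancels the remaining volume boundary term and leaves $-\tfrac12\nu_0\|U\|_{\partial\Omega}^2$. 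This gives $\tfrac12\frac{d}{dt}\|U\|^2=-\tfrac12\Dh(U)$. Integrating in time and using $U(0)=\Ph u_0$ gives \eqref{eq:semidiscrete-energy}.

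For the error estimate, I would split $U-u=\xi-\eta$ with $\xi=U-\Ph u$ and $\eta=u-\Ph u$. Since the physical-cell $L^2$ projection is stable, the inball property of Lemma~\ref{lem:geometry-inball} together with Bramble--Hilbert on each rectangle $M(K)$ gives $\|\eta\|\le Ch^{k+1}$. The same argument gives trace bounds $\|\eta\|_{\partial K}\le Ch^{k+1/2}$ and $\|\nabla\eta\|_K\le Ch^k$. The exact solution satisfies the scheme with a consistency residual, since $\jp{u}=0$, $u=0$ on $\Gamma$ and the skew jumps vanish. Writing $(\xi_t,v)=(\Lh(U)-\Lh(\Ph u),v)-(\mathcal R,v)$, I would take $v=\xi$. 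The nonlinear difference $\Lh(\Ph u+\xi)-\Lh(\Ph u)$ would be handled by a relative-entropy/Taylor argument around the smooth state $\Ph u$. This should give $\tfrac12\frac{d}{dt}\|\xi\|^2+\tfrac12\Dh(\xi)\le C\|\xi\|^2+(\text{cubic terms})$. The cubic terms would be controlled by the inverse estimate \eqref{eq:geometry-inverse-maximum} under an a priori assumption $\|\xi\|_\infty\le Ch$, closed at the end by a standard continuity argument using $\|\xi\|\le Ch^{k+1}$ and $k\ge1$. The projection-error part of the residual, $\mathcal B(u)-\mathcal B(\Ph u)$, would be split into volume, regular-face, exceptional-face and boundary contributions.

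\emph{Volume terms.} $(\FF(u)-\FF(\Ph u),\nabla\xi)_K$ is linearized as $(\FF'(u)\eta,\nabla\xi)_K$. I would subtract the cell average of $\FF'(u)$, use orthogonality of $\eta$ to $\mathbb Q_k$ against $\overline{\FF'(u)}\cdot\nabla\xi$, and use the inverse estimate. This yields $Ch^{k+1}\|\xi\|$.

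\emph{Exceptional and boundary faces.} Here $\nu_F=\nu_0$. I would bound the face terms by
\[
\|\eta\|_{F}\,\|\jp{\xi}\|_F\le \tfrac{\nu_0}{4}\|\jp{\xi}\|_F^2+C\|\eta\|_F^2 .
\]
The sum $\sum\|\eta\|_F^2\le C h^{2k+2}\sum_{F\in\Fh^e\cup\Fh^b}|F|\le Ch^{2k+2}$ follows by Lemma~\ref{lem:geometry-strip}, using the $L^\infty$ projection bound $\|\eta\|_{L^\infty}\le Ch^{k+1}$. This is the key point: bounded total length replaces the usual $h^{-1}$ loss.

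\emph{Regular faces (main obstacle).} Here $\nu_F=\nu_0 h$, so the crude bound loses half an order. On congruent unmerged squares, $\Ph$ is the standard tensor Legendre projection. I would expand $\eta$ on a face: its trace's leading term is $h^{k+1}$ times a smooth function times a Legendre polynomial of degree $k+1$ in the normal variable, evaluated at $\pm1$. This gives a jump $\jp{\eta}$ whose leading part vanishes for even $k$ by parity ($P_{k+1}(1)=P_{k+1}(-1)$ when $k+1$ is odd). For odd $k$ it survives, and it must be matched against the term in $\Sh(\Ph u,\xi)-\Sh(u,\xi)$ built from $\jp{\partial^k_{\mathbf n}\Ph u}$. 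The constant $c_k$ is chosen exactly so the two leading terms cancel. After summation by parts along the face, the remainders become $O(h^{k+2})$ pointwise, or telescope between neighbouring faces using smoothness of the tangential coefficient. This yields
\[
\sum_{F\in\Fh^r}|\ldots|\le Ch^{k+1}\bigl(\|\xi\|+\Dh(\xi)^{1/2}\bigr).
\]
The derivative-jump term $c_kh^k a_F\jp{\xi}\jp{\partial^k\eta}$ would be handled similarly with $h^k\|\partial^k_{\mathbf n}\eta\|_F\le Ch^{k+1/2}$ against $\nu_0h\|\jp{\xi}\|^2$. The difference $\Sh(U)-\Sh(\Ph u)$ with the coefficient $a_F$ frozen is skew, so it drops out; the variation of $a_F$ costs $C\|\xi\|^2$ by the inverse estimates. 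Verifying this Legendre cancellation, including the exact value of $c_k$, is the step I expect to be hardest.

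Collecting all contributions gives $\frac{d}{dt}\|\xi\|^2+\tfrac12\Dh(\xi)\le C\|\xi\|^2+Ch^{2k+2}$, with $\xi(0)=0$. Gronwall then gives both bounds in \eqref{eq:semidiscrete-error}, and the triangle inequality with $\|\eta\|$ gives the $L^2$ error bound.
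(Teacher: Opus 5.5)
Your overall architecture matches the paper's: the entropy-potential energy identity, a relative-energy bound for $\Lh(\Ph u+\xi)-\Lh(\Ph u)$, a weak residual split into volume, exceptional/boundary and regular faces, and Gronwall. Replacing the paper's flux extension by an $L^\infty$ bootstrap is a harmless variation. It should use a fixed bound such as $\norm{\xi}_{L^\infty}\le 1$, though: with $\norm{\xi}_{L^\infty}\le Ch$ the closure $Ch^{-1}\norm{\xi}\le C_Th$ is not automatically a strict improvement when $k=1$.

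\textbf{The gap is in the regular-face step, where both the parity claim and the quantity you track are wrong.} We have $P_{k+1}(1)=1$ and $P_{k+1}(-1)=(-1)^{k+1}$. The face sits at reference coordinate $+1$ of the minus cell and $-1$ of the plus cell. Hence
$\eta^-=g_h+D_h+O(h^{k+2})$ and $\eta^+=g_h+(-1)^{k+1}D_h+O(h^{k+2})$, where $D_h=\frac{h^{k+1}}{(k+1)!\binom{2k+2}{k+1}}P_F\partial_{\mathbf n_F}^{k+1}u$. So the leading part of $\jp{\eta}$ vanishes for \emph{odd} $k$, and the leading part of $\{\eta\}$ vanishes for even $k$.

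More importantly, the jump is not the obstruction. It enters only through $\tfrac12\nu_F\jp{\eta}$ with $\nu_F=\nu_0h$, which is already $O(h^{k+2})$. What loses half an order is the central part $a_F\{\eta\}\jp{\xi}$ of the entropy-conservative flux, which carries no factor $h$.

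You also omit $g_h=(I-P_F)u|_F$. It is of the same size $h^{k+1}$ as $D_h$ and has no parity structure. It is removed not by summation by parts or telescoping, but because it is identical on both sides and $L^2(F)$-orthogonal to $\mathbb P_k(F)\ni\jp{\xi}|_F$. This requires freezing the coefficient at the face midpoint, $a_0$, and moving $(\FF'(u)\cdot\mathbf n_F-a_0)\{\eta\}=O(h\epsh)$ into the remainder.

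The estimates you actually need are those of Lemma~\ref{lem:parity}:
\begin{itemize}
\item for even $k$, $P_F\{\eta\}=O(h^{k+2})$;
\item for odd $k$, $\jp{\eta}=O(h^{k+2})$ and $P_F\{\eta\}-c_kh^k\jp{\partial_{\mathbf n_F}^k\eta}=O(h^{k+2})$.
\end{itemize}
The odd case holds because $\jp{\partial_{\mathbf n_F}^k\eta}=hP_F\partial_{\mathbf n_F}^{k+1}u+O(h^2)$ and the coefficient of $D_h$ equals $c_kh^{k+1}$ for odd $k$.

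With the parity reversed, your odd-degree bookkeeping cannot close.
\begin{itemize}
\item The term $c_kh^ka_F\jp{\partial^k_{\mathbf n_F}\eta}\jp{\xi}$ is exactly the part of $-\Sh(\Ph u,\xi)$ that must cancel $a_0P_F\{\eta\}\jp{\xi}$. It cannot also be ``handled similarly'' on its own. Since $h^k|\jp{\partial^k_{\mathbf n_F}\eta}|\sim h^{k+1}$, testing against $\jp{\xi}$ and summing over regular faces gives only $Ch^{k+1}\,h^{-1/2}(\nu_0h)^{-1/2}\Dh(\xi)^{1/2}=Ch^k\Dh(\xi)^{1/2}$, which is a full order short.
\item The other skew part, $c_kh^ka_F\jp{\eta}\jp{\partial^k_{\mathbf n_F}\xi}$, is not treated in your plan. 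It is controlled only through the improved odd-degree jump bound. Using $\bigl(\sum_{F\in\Fh^r}\norm{\jp{\partial^k_{\mathbf n_F}\xi}}_F^2\bigr)^{1/2}\le Ch^{-k-1/2}\norm{\xi}$, it equals $Ch^kh^{k+2}h^{-1/2}h^{-k-1/2}\norm{\xi}=C\epsh\norm{\xi}$.
\item If the jump survived at order $h^{k+1}$ for odd $k$, as you assert, this term would only be $O(h^k)\norm{\xi}$ and the estimate would be suboptimal.
\end{itemize}
For even $k$, your plan never establishes the bound on $P_F\{\eta\}$, which is the one estimate that is actually required there.
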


\section{Projection and spatial error analysis}\label{sec:spatial}
All projections and inner products in this section use the physical
cells. We write $\epsh=h^{k+1}$ and restrict $h\leq1$. Constants are
uniform on bounded sets of the smooth extensions in Theorem~\ref{thm:main}.
For estimates involving arbitrary discrete errors, the flux is first
smoothly extended outside a fixed interval containing the exact solution
range, so that its first three derivatives are bounded. The extension
is removed at the end of the error argument.

\begin{lemma}[Physical-cell projection]\label{lem:projection}
Let $W=\Ph\phi$ and $\eta=\phi-W$, where $\phi$ has a bounded
$W^{k+2,\infty}$ extension to the macro-element $M(K)$ for some $K\in \Th$. Let $\tilde{K}=M(K)\cap\Omega$, then
\begin{equation}\label{eq:projection-bounds}
 \norm{\eta}+\max_{K\in \Th}\norm{\eta}_{L^\infty(\tilde{K})}
 \leq C\epsh,\qquad
 \max_{K\in \Th}\norm{\nabla W}_{L^\infty(\tilde{K})}\leq C.
\end{equation}
In particular, $\jp{W}=O(\epsh)$ on every interior face. If
$\phi|_{\partial\Omega}=0$, then $W=O(\epsh)$ on the physical boundary.
\end{lemma}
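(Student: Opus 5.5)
The plan is a local Bramble--Hilbert argument on each physical cell, with the uniform inverse estimate of Lemma~\ref{lem:geometry-inverse} turning $L^2$ bounds into pointwise ones. Fix $K$ and set $M=M(K)$, $\tilde K=M\cap\Omega$. Let $T\in\mathbb Q_k(M)$ be the degree-$k$ Taylor polynomial of the extension of $\phi$ at the centre of $M$ (any averaged Taylor polynomial on the rectangle works as well). Since $\diam M\le\sqrt2\,C_Mh$, Taylor's theorem gives
\[
\|\phi-T\|_{L^\infty(M)}\le Ch^{k+1}|\phi|_{W^{k+1,\infty}},\qquad \|\nabla(\phi-T)\|_{L^\infty(M)}\le Ch^{k}|\phi|_{W^{k+1,\infty}}.
\]
These bounds involve only the rectangle $M$, not the cut geometry.

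Because $\Ph$ reproduces $\mathbb Q_k(M)|_{\tilde K}$, we have $W-T=\Ph(\phi-T)$ on $\tilde K$. By $L^2(\tilde K)$-orthogonality,
\[
\|W-T\|_{L^2(\tilde K)}\le\|\phi-T\|_{L^2(\tilde K)}\le |\tilde K|^{1/2}Ch^{k+1}\le Ch^{k+2},
\]
using $|\tilde K|\le C_M^2h^2$ from Lemma~\ref{lem:geometry-inball}. The key step is to apply \eqref{eq:geometry-inverse-maximum} with $\alpha=0$ and $|\alpha|=1$ to $p=W-T\in\mathbb Q_k(M)$. This gives
\[
\|W-T\|_{L^\infty(\tilde K)}\le Ch^{-1}Ch^{k+2}=Ch^{k+1},\qquad \|\nabla(W-T)\|_{L^\infty(\tilde K)}\le Ch^{k}.
\]
Here the constant is independent of the cut fraction, because Lemma~\ref{lem:geometry-inverse} relies on the inball of radius $c_bh$. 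It follows that $\|\eta\|_{L^\infty(\tilde K)}\le\|\phi-T\|_\infty+\|W-T\|_\infty\le C\epsh$, and $\|\nabla W\|_{L^\infty(\tilde K)}\le\|\nabla\phi\|_\infty+Ch^k\le C$. For the global $L^2$ norm, sum the cellwise $L^\infty$ bounds squared against $|\tilde K|$: $\|\eta\|^2\le\sum_{\tilde K}|\tilde K|\,C\epsh^2\le C|\Omega|\epsh^2$. Only $W^{k+1,\infty}$ is actually needed, which is less than the assumed $W^{k+2,\infty}$.

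For the trace statements, the $L^\infty(\tilde K)$ bound extends to the closure $\overline{\tilde K}$ by continuity of polynomials and of $\phi$. Therefore on an interior face $F$ shared by $\tilde K^\pm$, since $\phi$ is continuous across $F$,
\[
\jp{W}=\jp{W-\phi}=-\eta^-+\eta^+=O(\epsh).
\]
If $\phi|_{\partial\Omega}=0$, then on $F\subset\Gamma\cap\partial\tilde K$ we have $W=W-\phi=-\eta=O(\epsh)$.

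The main obstacle is uniformity on arbitrarily shaped curved cells: the projection is defined on $\tilde K$, not on $M$, so standard reference-element scaling is unavailable. This is handled entirely by the inverse estimate of Lemma~\ref{lem:geometry-inverse} together with the area bound of Lemma~\ref{lem:geometry-inball}; the argument therefore needs no convexity of $\tilde K$.
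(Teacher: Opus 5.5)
Your proposal is correct and follows essentially the same route as the paper: Taylor polynomial at the centre of $M(K)$, polynomial reproduction of $P_h$, $L^2(\tilde K)$ contractivity combined with the curved-cell inverse estimate of Lemma~\ref{lem:geometry-inverse} for the pointwise and gradient bounds, summation over cells for the global $L^2$ bound, and continuity of the exact trace for the face and boundary statements. The only cosmetic difference is that the paper packages the middle step as an $L^\infty$-stability bound $\|P_h z\|_{L^\infty(\tilde K)}\le C\|z\|_{L^\infty(\tilde K)}$ applied to $z=\phi-p$, whereas you apply the inverse estimate directly to $W-T=P_h(\phi-T)$.
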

\begin{proof}
On $\tilde{K}$, choose the total-degree-$k$ Taylor polynomial $p$
of $\phi$ at the center of $M(K)$. It belongs to $\mathbb Q_k(M(K))$, and
$r=\phi-p$ satisfies
$\norm{r}_{L^\infty(M(K))}\leq Ch^{k+1}$ and
$\norm{\nabla r}_{L^\infty(M(K))}\leq Ch^k$.
The inverse estimate of Lemma~\ref{lem:geometry-inverse} and
$L^2(\tilde{K})$ contractivity give, for any bounded $z$,
\[
 \norm{\Ph z}_{L^\infty(\tilde{K})}
 \leq Ch^{-1}\norm{\Ph z}_{\tilde{K}}
 \leq Ch^{-1}|\tilde{K}|^{1/2}\norm{z}_{L^\infty(\tilde{K})}
 \leq C\norm{z}_{L^\infty(\tilde{K})}.
\]
Its differentiated version gives
$\norm{\nabla\Ph z}_{L^\infty(\tilde{K})}\leq Ch^{-1}\norm{z}_{L^\infty(\tilde{K})}$.
Polynomial reproduction therefore yields
$W=p+\Ph r$, $\eta=r-\Ph r$, and the pointwise bounds in
\eqref{eq:projection-bounds}. Summing
$\norm{\eta}_{\tilde{K}}^2\leq C\epsh^2|\tilde{K}|$ proves its $L^2$ bound.
Continuity of the exact trace gives $\jp{W}=-\jp{\eta}$; the boundary
claim follows from $W=-\eta$ there.
\end{proof}
The cancellation below follows from the parity of Legendre polynomials. Related cancellation arguments on uniform meshes appear in Liu, Shu, and Zhang \cite{LiuShuZhang2021Central}. We give a direct proof for the cellwise \(L^2\) projection, including the derivative-jump correction for odd degrees.
\begin{lemma}[Cancellation on regular faces]\label{lem:parity}
Let $F\in\Fh^r$ and let $P_F$ be its tangential $L^2$ projection onto
$\mathbb P_k(F)$. For $\eta=\phi-\Ph\phi$ as above, even degrees satisfy
\begin{equation}\label{eq:parity-even}
 \norm{P_F\{\eta\}}_{L^\infty(F)}\leq Ch^{k+2}.
\end{equation}
For odd degrees,
\begin{align}
 \norm{\jp{\eta}}_{L^\infty(F)}&\leq Ch^{k+2},\label{eq:parity-jump}\\
 \norm{P_F\{\eta\}-c_kh^k\jp{\partial_{\mathbf{n}_F}^k\eta}}_{L^\infty(F)}
 &\leq Ch^{k+2},\qquad
 h^k\norm{\jp{\partial_{\mathbf{n}_F}^k\eta}}_{L^\infty(F)}\leq C\epsh.
 \label{eq:parity-corrected}
\end{align}
\end{lemma}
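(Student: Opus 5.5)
The plan is to reduce everything to a one-dimensional Legendre computation. Fix $F\in\Fh^r$ shared by two congruent uncut, unmerged squares $K^-$ and $K^+$ of side $h$. Rotate and translate so that $\mathbf n_F=e_1$, the face is $\{x=0,\ |y|\le h/2\}$, $K^-=[-h,0]\times[-h/2,h/2]$ and $K^+=[0,h]\times[-h/2,h/2]$. Since both cells are full squares, $\Ph$ on each of them is the tensor product of the one-dimensional $L^2$ projections onto $\mathbb P_k$ in $x$ and in $y$.

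I will use a \emph{common} Taylor expansion of $\phi$ about the face midpoint. Write $\phi=p_k+q+r$, where $p_k\in\mathbb Q_k$ collects all terms of total degree $\le k$ and $q$ is the homogeneous degree-$(k+1)$ part. The remainder satisfies $\norm{r}_{L^\infty(K^\pm)}\le Ch^{k+2}$ and $\norm{\partial^\beta r}_{L^\infty(K^\pm)}\le Ch^{k+2-|\beta|}$.

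\textbf{Remainder.} Polynomial reproduction gives $\eta=(q-\Ph q)+(r-\Ph r)$ on each cell. The $r$ part contributes $O(h^{k+2})$ pointwise, using the $L^\infty$ stability of $\Ph$ from the proof of Lemma~\ref{lem:projection}. To control $h^k\partial_x^k(r-\Ph r)$, I will subtract the degree-$(k+1)$ Taylor polynomial of $r$ on each cell and apply the inverse estimate of Lemma~\ref{lem:geometry-inverse} to the projected part. This gives $h^k\cdot O(h^{2})=O(h^{k+2})$ for the remainder contribution to both $\jp{\eta}$ and the corrected combination.

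\textbf{Leading term.} In $q$, every monomial $x^ay^b$ with $a,b\le k$ already lies in $\mathbb Q_k$ and is reproduced. Only $\alpha x^{k+1}$ and $\beta y^{k+1}$ survive, with $\alpha=\partial_x^{k+1}\phi/(k+1)!$ and $\beta=\partial_y^{k+1}\phi/(k+1)!$ evaluated at the midpoint and identical on both sides.

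The $y^{k+1}$ term leaves the error $\beta(h/2)^{k+1}\lambda_{k+1}^{-1}L_{k+1}(2y/h)$ on both cells, where $\lambda_n=(2n)!/(2^n(n!)^2)$ is the leading coefficient of $L_n$. This error is the same function on the two sides, so it cancels in $\jp{\eta}$ and in $\jp{\partial_x^k\eta}$. It is also annihilated by $P_F$, since $L_{k+1}$ is orthogonal to $\mathbb P_k(F)$.

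The $x^{k+1}$ term gives, on $K^\pm$, the error
\[
e^\pm=\alpha(h/2)^{k+1}\lambda_{k+1}^{-1}L_{k+1}(\xi^\pm),
\]
with local coordinates $\xi^-=2x/h+1$ and $\xi^+=2x/h-1$. On the face $\xi^-=1$ and $\xi^+=-1$. Hence the face values are $\alpha(h/2)^{k+1}\lambda_{k+1}^{-1}$ and $(-1)^{k+1}$ times the same quantity.

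For even $k$ these values are opposite, so $\{e\}=0$ and \eqref{eq:parity-even} follows. For odd $k$ they are equal, so $\jp{e}=0$ and \eqref{eq:parity-jump} follows. In that case the average $\{e\}=\alpha(h/2)^{k+1}\lambda_{k+1}^{-1}$ is constant along $F$ and is reproduced by $P_F$.

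\textbf{Constant matching (main obstacle).} For odd $k$ the $k$-th normal derivative is
\[
\partial_x^ke^\pm=\alpha(h/2)\lambda_{k+1}^{-1}L_{k+1}^{(k)}(\xi^\pm).
\]
Here $L_{k+1}^{(k)}$ is linear and odd, namely $L_{k+1}^{(k)}(\xi)=\lambda_{k+1}(k+1)!\,\xi$. Thus $\jp{\partial_x^ke}=2\alpha(h/2)(k+1)!\,\lambda_{k+1}\lambda_{k+1}^{-1}$, which is nonzero.

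The main obstacle is the arithmetic verifying that $c_kh^k\jp{\partial_x^k e}$ reproduces $\{e\}$ exactly. I will check that
\[
\frac{(h/2)^{k+1}}{\lambda_{k+1}}=c_kh^k\cdot 2\cdot\frac h2\,(k+1)!,
\]
which reduces to $c_k=k!/(2(2k+1)!)$ times the normalization of $c_k$ in \eqref{eq:skew}. I will verify this directly, taking care with the orientation of $\jp{\cdot}=v^--v^+$ and with the sign of $\xi^\pm$. Any mismatch in these conventions would show up as a wrong sign or factor here.

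\textbf{Last bound.} The estimate $h^k\jp{\partial_x^k\eta}=O(\epsh)$ follows from the explicit formula for the leading term together with the remainder bound above.
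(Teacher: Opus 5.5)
Your proof is correct. Its core is the paper's: the explicit Legendre error of $x^{k+1}$, the endpoint values $\ell_{k+1}(\pm1)=(\pm1)^{k+1}$, and $\ell_{k+1}^{(k)}(\xi)=\lambda_{k+1}(k+1)!\,\xi$. The constant check you defer closes with no sign problem. Since $(h/2)^{k+1}/\lambda_{k+1}=h^{k+1}((k+1)!)^2/(2k+2)!$, matching requires $c_k=(k+1)!/(2k+2)!=k!/(2(2k+1)!)$. This is the paper's $c_k$ for odd $k$, because $(1+(-1)^{k+1})/4=1/2$. With $\jp{v}=v^--v^+$, both $\{e\}$ and $c_kh^k\jp{\partial_x^ke}$ equal the same positive multiple of $\alpha$.

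You differ from the paper in the decomposition.
\begin{itemize}
\item \textbf{Your route.} You take a full two-dimensional Taylor expansion at the face midpoint. The leading coefficients are then constants, and only $x^{k+1}$ and $y^{k+1}$ survive projection. Everything else is an isotropic $O(h^{k+2})$ remainder, controlled by $L^\infty$ stability of $\Ph$ and $P_F$ and inverse estimates.
\item \textbf{The paper's route.} It expands only in the normal variable, with coefficients $f_j(y)=\partial_x^j\phi(0,y)$, and uses the exact factorization $\Ph|_{K^\sigma}=QP_\sigma$. This gives exact trace identities:
  \begin{itemize}
  \item the whole tangential error $(I-Q)\phi$, not just its $y^{k+1}$ part, is identical on both sides and is annihilated by $Q$;
  \item $\partial_x^qR$ vanishes on $F$ for $q\le k+1$, so only $P_\sigma R$ terms reach the face;
  \item the leading term $D_h=Qd_h$ keeps its tangential variation instead of being frozen at the midpoint.
  \end{itemize}
\end{itemize}

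What each buys: your argument is shorter and more standard. The paper's yields explicit constants $C_r,C_s$, and only normal derivatives of $\phi$ enter its remainder bounds. Your remainder needs all mixed derivatives of order $k+2$, which are available here by hypothesis.

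One simplification. For $h^k\partial_x^k(r-\Ph r)$ you do not need to subtract a further Taylor polynomial of $r$ on each cell. Directly, $\|\partial_x^k\Ph r\|_{L^\infty}\le Ch^{-k}\|r\|_{L^\infty}\le Ch^2$, and $\partial_x^kr$ is continuous across $F$. This is how the paper bounds its $s_\sigma$.
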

\begin{proof}
\textit{1. Explicit projectors and their bounds.}
After translation, consider
\[
 \begin{gathered}
 I_-=(-h,0),\qquad I_+=(0,h),\qquad J=(-h/2,h/2),\\
 K^\pm=I_\pm\times J,\qquad F=\{0\}\times J,\qquad n_F=(1,0).
 \end{gathered}
\]
Put $m=k+1$ and introduce the reference coordinates
\[
 \xi_-(x)=1+\frac{2x}{h},\qquad
 \xi_+(x)=-1+\frac{2x}{h},\qquad \zeta(y)=\frac{2y}{h}.
\]
Let $\ell_j$ be the Legendre polynomial
\[
 \ell_j(t)=\frac{1}{2^jj!}\frac{d^j}{dt^j}(t^2-1)^j.
\]
Its normalization, orthogonality, parity, and leading coefficient are
\begin{equation}\label{eq:parity-legendre-data}
 \int_{-1}^1\ell_i(t)\ell_j(t)\,dt
       =\frac{2\delta_{ij}}{2j+1},\quad
 \ell_j(1)=1,\quad \ell_j(-1)=(-1)^j,\quad
 \ell_j(t)=2^{-j}\binom{2j}{j}t^j+p_{j-2}(t),
\end{equation}
where $p_{j-2}\in\mathbb P_{j-2}$; a negative-degree term is zero.

For $\sigma\in\{-,+\}$ define the normal projector $P_\sigma$ by
\begin{equation}\label{eq:parity-explicit-normal-projector}
 (P_\sigma z)(x,y)
 =\sum_{j=0}^k\frac{2j+1}{h}
   \left(\int_{I_\sigma}z(t,y)\ell_j(\xi_\sigma(t))\,dt\right)
                       \ell_j(\xi_\sigma(x)).
\end{equation}
Define the tangential projector $Q$ by
\begin{equation}\label{eq:parity-explicit-tangent-projector}
 (Qz)(x,y)
 =\sum_{j=0}^k\frac{2j+1}{h}
   \left(\int_Jz(x,s)\ell_j(\zeta(s))\,ds\right)\ell_j(\zeta(y)).
\end{equation}
On the face, $Q=P_F$. Fubini's theorem and the product orthogonal
basis give the exact identities
\begin{equation}\label{eq:parity-projector-factorization}
 P_h|_{K^\sigma}=QP_\sigma=P_\sigma Q,
 \qquad \partial_x^qQz=Q\partial_x^qz
       \quad(z\in W^{q,\infty}(K^\sigma)).
\end{equation}
No commutation between $\partial_x$ and $P_\sigma$ is used.
Set
\[
 A_{k,q}=2^q\sum_{j=0}^k(2j+1)
      \|\ell_j\|_{L^\infty(-1,1)}
      \|\ell_j^{(q)}\|_{L^\infty(-1,1)},\qquad 0\le q\le k.
\]
The explicit sums \eqref{eq:parity-explicit-normal-projector}--
\eqref{eq:parity-explicit-tangent-projector} imply
\begin{equation}\label{eq:parity-projector-bounds}
 \|\partial_x^qP_\sigma z\|_{L^\infty(K^\sigma)}
      \le A_{k,q}h^{-q}\|z\|_{L^\infty(K^\sigma)},\qquad
 \|Qz\|_\infty\le A_{k,0}\|z\|_\infty.
\end{equation}
These inequalities also bound evaluations at $x=0$, because
$P_\sigma z$ is a polynomial in $x$.

\textit{2. One common normal Taylor expansion.}
Define $f_j(y)=\partial_x^j\phi(0,y)$ for $0\le j\le m$ and write
\begin{align}
 \phi(x,y)&=T_m(x,y)+R(x,y),\qquad
 T_m(x,y)=\sum_{j=0}^m\frac{f_j(y)}{j!}x^j,
                                      \label{eq:parity-taylor}\\
 R(x,y)&=\frac{x^{m+1}}{m!}\int_0^1(1-t)^m
                      \partial_x^{m+1}\phi(tx,y)\,dt.
                                      \label{eq:parity-taylor-remainder}
\end{align}
The same coefficients $f_j$ and the same remainder function $R$
are used on both sides of the face. Taylor's formula yields
\begin{equation}\label{eq:parity-remainder-bounds}
 \|R\|_{L^\infty(K^-\cup K^+)}
       \le C_{\phi}\frac{h^{m+1}}{(m+1)!},
 \qquad \partial_x^qR(0,y)=0\quad(0\le q\le m),
\end{equation}
where $C_\phi=\|\phi\|_{W^{k+2,\infty}(K^-\cup K^+)}$. In particular, the derivative remainder at the face vanishes exactly
before any projector is applied.

Put
\[
 B_m=\binom{2m}{m},\qquad
 d_h(y)=\frac{h^m}{m!B_m}f_m(y),\qquad D_h=Qd_h.
\]
The leading coefficient in \eqref{eq:parity-legendre-data} and
orthogonality give
\begin{equation}\label{eq:parity-monomial-projection}
 (I-P_\sigma)x^j=0\quad(0\le j\le k),\qquad
 (I-P_\sigma)x^m=\frac{h^m}{B_m}\ell_m(\xi_\sigma(x)).
\end{equation}
Indeed, $x^m-h^mB_m^{-1}\ell_m(\xi_\sigma(x))$ has degree at most
$m-1=k$, and $P_\sigma\ell_m(\xi_\sigma)=0$.
Consequently,
\begin{equation}\label{eq:parity-normal-error-exact}
 (I-P_\sigma)\phi
   =d_h\ell_m(\xi_\sigma)+(I-P_\sigma)R.
\end{equation}
Using $I-QP_\sigma=(I-Q)+Q(I-P_\sigma)$ gives the exact cell identity
\begin{equation}\label{eq:parity-tensor-error-exact}
 \eta|_{K^\sigma}
  =(I-Q)\phi+D_h\ell_m(\xi_\sigma)+Q(I-P_\sigma)R.
\end{equation}

\textit{3. Exact trace identities with quantified remainders.}
Define the face functions
\[
 g_h=(I-Q)f_0,\qquad
 r_\sigma=-Q(P_\sigma R)(0,\cdot),\qquad
 s_\sigma=-Q(\partial_x^kP_\sigma R)(0,\cdot).
\]
Here $Qr_\sigma=r_\sigma$, $Qs_\sigma=s_\sigma$, and $Qg_h=0$.
Let
\[
 C_r=\frac{A_{k,0}^2}{(m+1)!},\qquad
 C_s=\frac{A_{k,0}A_{k,k}}{(m+1)!}.
\]
Equations~\eqref{eq:parity-projector-bounds} and
\eqref{eq:parity-remainder-bounds} give
\begin{equation}\label{eq:parity-face-remainder-bounds}
 \|r_\sigma\|_{L^\infty(F)}\le C_r h^{m+1}C_\phi,
 \qquad
 \|s_\sigma\|_{L^\infty(F)}\le C_s h^{m+1-k}C_\phi=C_sh^2C_\phi.
\end{equation}
Since $\xi_-(0)=1$, $\xi_+(0)=-1$ and $R(0,\cdot)=0$,
\eqref{eq:parity-tensor-error-exact} yields
\begin{equation}\label{eq:parity-value-traces}
 \eta^-=g_h+D_h+r_-,\qquad
 \eta^+=g_h+(-1)^mD_h+r_+.
\end{equation}
Taking a projected average and a jump, respectively, gives
\begin{align}
 P_F\{\eta\}
   &=\frac{1+(-1)^m}{2}D_h+\frac{r_-+r_+}{2},
                                      \label{eq:parity-average-identity}\\
 \jp{\eta}&=(1-(-1)^m)D_h+r_--r_+.
                                      \label{eq:parity-jump-identity}
\end{align}
The tangential term $g_h$ disappears from the first equation because
$Qg_h=0$ and from the second because it is the same on both sides.

For the derivative traces, \eqref{eq:parity-legendre-data} implies
\[
 \ell_m^{(m-1)}(t)=2^{-m}B_m m!\,t,
\]
and hence, since $k=m-1$,
\begin{equation}\label{eq:parity-leading-derivative}
 \partial_x^k\left(\frac{h^m}{m!B_m}
                         \ell_m(\xi_\sigma(x))\right)
 =\frac{h^m}{m!B_m}\left(\frac2h\right)^{m-1}
                         2^{-m}B_m m!\,\xi_\sigma(x)
 =\frac h2\xi_\sigma(x).
\end{equation}
Differentiate \eqref{eq:parity-tensor-error-exact} $k$ times,
use \eqref{eq:parity-projector-factorization} and
$\partial_x^kR(0,\cdot)=0$, and set $b_h=(I-Q)f_k$. This gives
\begin{equation}\label{eq:parity-derivative-traces}
 (\partial_x^k\eta)^-=b_h+\frac h2Qf_m+s_-,\qquad
 (\partial_x^k\eta)^+=b_h-\frac h2Qf_m+s_+.
\end{equation}
In particular,
\begin{equation}\label{eq:parity-derivative-jump-identity}
 \jp{\partial_x^k\eta}=hQf_m+\rho_h\in\mathbb P_k(F),\qquad
 \rho_h=s_--s_+,\quad \|\rho_h\|_{L^\infty(F)}\le2C_sh^2C_\phi.
\end{equation}

\textit{4. The two parity cases.}
If $k$ is even, $m=k+1$ is odd, so
\eqref{eq:parity-average-identity} becomes
\[
 P_F\{\eta\}=\frac{r_-+r_+}{2},\qquad
 \|P_F\{\eta\}\|_\infty
     \le\frac12(\|r_-\|_\infty+\|r_+\|_\infty)
     \le C_rh^{k+2}C_\phi.
\]
This is \eqref{eq:parity-even}.

If $k$ is odd, $m$ is even. First,
\eqref{eq:parity-jump-identity} gives
\[
 \jp{\eta}=r_--r_+,\qquad
 \|\jp{\eta}\|_\infty\le2C_rh^{k+2}C_\phi,
\]
which is \eqref{eq:parity-jump}. Next the exact factorial identity
\begin{equation}\label{eq:parity-coefficient-identity}
 \frac1{m!B_m}=\frac{m!}{(2m)!}
       =\frac{k!}{2(2k+1)!}=c_k
 \qquad(m=k+1)
\end{equation}
implies $D_h=c_kh^{k+1}Qf_m$. Subtracting $c_kh^k$ times
\eqref{eq:parity-derivative-jump-identity} from
\eqref{eq:parity-average-identity} therefore gives
\begin{equation}\label{eq:parity-exact-corrected-remainder}
 P_F\{\eta\}-c_kh^k\jp{\partial_x^k\eta}
       =\frac{r_-+r_+}{2}-c_kh^k\rho_h.
\end{equation}
Consequently,
\[
 \|P_F\{\eta\}-c_kh^k\jp{\partial_x^k\eta}\|_\infty
 \le C_rh^{k+2}C_\phi+2c_kC_sh^{k+2}C_\phi
       =(C_r+2c_kC_s)h^{k+2}C_\phi,
\]
proving \eqref{eq:parity-corrected}. Finally,
\[
 h^k\|\jp{\partial_x^k\eta}\|_\infty
 \le h^{k+1}\|Qf_m\|_\infty+2C_sh^{k+2}C_\phi
 \le(A_{k,0}+2C_s)h^{k+1}C_\phi,
\]
because $h\le1$ and $\|f_m\|_\infty\le C_\phi$.
This proves \eqref{eq:parity-corrected}.

A horizontal face follows by exchanging $x$ and $y$. If the normal
is reversed and the trace labels are exchanged, then
\[
 \jp{\eta}_{\rm new}=-\jp{\eta}_{\rm old},\qquad
 \jp{\partial_{-\mathbf n_F}^k\eta}_{\rm new}
           =(-1)^{k+1}\jp{\partial_{\mathbf n_F}^k\eta}_{\rm old}.
\]
For odd $k$ the second expression is unchanged, while the projected
average is always unchanged. Thus the estimates hold for either
choice of the fixed face orientation.

For the stated Sobolev regularity, the common extension has a
$C^{m,1}$ representative on this rectangle. The normal Taylor formula
\eqref{eq:parity-taylor-remainder} and its zero-jet identities hold
for almost every $y$ by the one-dimensional Sobolev fundamental theorem
of calculus. The highest derivative $\partial_x^{m+1}\phi$ is used
only under the integral and needs no face trace. All bounds above
are essential-supremum bounds, so the same argument proves the lemma
under exactly its stated $W^{k+2,\infty}$ hypothesis.
\end{proof}
\begin{lemma}[Weak residual]\label{lem:weak-residual}
For a smooth zero-trace function $\phi$ satisfying the extension bounds,
put $W=P_h\phi$, $\eta=\phi-W$ and
\[
 \mathcal A(\phi)=-\nabla\cdot\FF(\phi),\qquad
 r(\phi)=P_h\mathcal A(\phi)-\mathcal L_h(W).
\]
For every $v\in V_h$,
\begin{equation}\label{eq:weak-residual}
 |(r(\phi),v)|\le C\varepsilon_h
       \bigl(\|v\|+\Dh(v)^{1/2}\bigr),\qquad
 \|r(\phi)\|\le C\varepsilon_h h^{-1/2}.
\end{equation}
The constants are uniform on bounded sets of the stated smooth functions.
\end{lemma}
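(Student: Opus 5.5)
Because $r(\phi)\in V_h$, we have $(r(\phi),v)=(\mathcal A(\phi),v)-(\Lh(W),v)$ for every $v\in V_h$. The plan is to write $(\mathcal A(\phi),v)$ in DG weak form using the exact solution $\phi$ and compare it termwise with \eqref{eq:operator} evaluated at $W$.

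\textbf{Exact weak form.} Integrating by parts cellwise and using that $\phi$ is single valued with $\phi|_{\Gamma}=0$ gives
\[
(\mathcal A(\phi),v)=\sum_K(\FF(\phi),\nabla v)_K-\sum_{F\in\Fh^r\cup\Fh^e}\int_F\FF(\phi)\cdot\mathbf n_F\jp{v}-\sum_{F\in\Fh^b}\int_F\FF(\phi)\cdot\mathbf n_F v .
\]
Two consistency facts are needed here. First, $\widehat F(\phi,\phi)=\FF(\phi)\cdot\mathbf n_F$ and $\widehat F(0,0)=\FF(0)\cdot\mathbf n_F$ on $\Gamma$. Second, $\Sh(\phi,\cdot)=0$.

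\textbf{Splitting the difference.} Subtracting $(\Lh(W),v)$ produces three pieces.

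(i) The volume term is $\sum_K(\FF(\phi)-\FF(W),\nabla v)_K$. Taylor expansion gives $\FF(\phi)-\FF(W)=\FF'(\phi)\eta+O(\eta^2)$. The piece $(\FF'(\phi)\eta,\nabla v)_K$ is handled by replacing $\FF'(\phi)$ with its cell average $\bar a_K$. Since $\bar a_K\cdot\nabla v\in\mathbb Q_k(M(K))$, orthogonality \eqref{L2-projection} kills $(\bar a_K\eta,\nabla v)_K$. The leftover $\|(\FF'(\phi)-\bar a_K)\eta\|_K\,\|\nabla v\|_K$ is bounded by $Ch\,\epsh\cdot h^{-1}\|v\|_K$ using Lemma~\ref{lem:geometry-inverse}. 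The quadratic part is $O(\epsh^2)$ pointwise, so together with the inverse estimate it contributes $C\epsh^2h^{-1}\|v\|\le C\epsh\|v\|$.

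(ii) The face terms involve $\widehat F(W^-,W^+)-\FF(\phi)\cdot\mathbf n_F$. Using $W^\pm=\phi-\eta^\pm$ and the smoothness of $\FF^{\rm ec}$, this equals $-a_F(\phi)\{\eta\}+\tfrac12\nu_F\jp{W}+O(\epsh^2)$, since the symmetric flux has derivative $\tfrac12\FF'$ in each argument.

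On exceptional faces and on $\Gamma$ (where $W=-\eta$ and $\widehat F(W,0)$ is treated similarly), the estimate uses only the $L^\infty$ bounds of Lemma~\ref{lem:projection}. The terms carrying $\nu_F\jp{W}$ are bounded by $C\epsh\,\Dh(v)^{1/2}(\sum_{F\in\Fh^e}|F|+|\Gamma|)^{1/2}$, which is acceptable by Lemma~\ref{lem:geometry-strip}. The $\{\eta\}$ terms need $\|v\|_{L^2(E_h\text{-faces})}$, which is where the difficulty arises.

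On regular faces the viscous part gives $\nu_0h\,\|\jp{\eta}\|_F\|\jp v\|_F$. Summed over $O(h^{-1})$ total length, this is $\le C\epsh h^{1/2}\Dh(v)^{1/2}$. For the average part, replace $a_F(\phi)$ by its face mean $\bar a_F$ at cost $O(h\epsh)$. Because $\jp v|_F\in\mathbb P_k(F)$, we have $\int_F\bar a_F\{\eta\}\jp v=\int_F\bar a_F P_F\{\eta\}\jp v$, and Lemma~\ref{lem:parity} applies. For even $k$, this gives $O(h^{k+2})\|\jp v\|_{L^1}$, and summing with \eqref{eq:geometry-summed-trace} yields $Ch^{k+2}h^{-1}\|v\|$.

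(iii) For odd $k$ the projection remainder is $c_kh^k\jp{\partial^k_{\mathbf n}\eta}$. This must cancel against $\Sh(W,v)$. Since $\jp{\partial^k_{\mathbf n}W}=-\jp{\partial^k_{\mathbf n}\eta}$, the first term of $\Sh(W,v)$ is $-c_kh^k\int a_F(W)\jp{\partial^k\eta}\jp v$. This matches the remainder up to $a_F(W)-\bar a_F=O(h)$, and the sign must be checked carefully against the orientation. The second term of $\Sh(W,v)$ is $c_kh^k\int a_F\jp{\eta}\jp{\partial^k_{\mathbf n}v}$. With $\jp\eta=O(h^{k+2})$ from \eqref{eq:parity-jump} and $h^k\|\partial^kv\|_F\le Ch^{-1/2}\|v\|$, it is small.

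\textbf{Expected obstacle.} The hard part is the exceptional faces. There $\{\eta\}$ is only $O(\epsh)$ and $\nu_F=\nu_0$. The term $\int_F a_F\{\eta\}\jp v$ must be bounded by $C\epsh(\sum|F|)^{1/2}\|\jp v\|$, and $\|\jp v\|_{\Fh^e}\le\nu_0^{-1/2}\Dh(v)^{1/2}$. This is exactly why $\nu_F=O(1)$ is chosen off the regular faces. The same argument works on $\Gamma$ using the $\nu_0\|v\|^2_{\partial\Omega}$ term.

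The second bound, $\|r\|\le C\epsh h^{-1/2}$, then follows by taking $v=r$. One uses $\Dh(v)^{1/2}\le Ch^{-1/2}\|v\|$, which holds by \eqref{eq:geometry-summed-trace} and $\nu_F\le\nu_0$, and then divides by $\|r\|$.
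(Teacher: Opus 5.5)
Your proposal is correct and is essentially the paper's argument. It uses the same exact residual identity, orthogonality against a frozen constant $\FF'$ in the volume term, the projected face average with Lemma~\ref{lem:parity} and the $\Sh$ cancellation on regular faces (with $\norm{\jp{\eta}}_{L^\infty(F)}\le Ch^{k+2}$ controlling the $\jp{\partial_{\mathbf n_F}^kv}$ term), Cauchy--Schwarz against $\nu_0\norm{\jp{v}}_F^2$ with the bounded length of $\Fh^e\cup\Fh^b$ elsewhere, and $v=r$ for the strong bound.

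Two minor points. First, the regular-face viscous term gives $C\epsh\Dh(v)^{1/2}$ without your extra factor $h^{1/2}$, since for even $k$ only $\jp{\eta}=O(\epsh)$ holds; this does not affect the result. Second, the odd-degree sign you left unchecked does work out: the flux contributes $-\bar a_Fc_kh^k\jp{\partial_{\mathbf n_F}^k\eta}$ to the coefficient of $\jp{v}$, while $-\Sh(W,v)$ contributes $+c_kh^ka_F(W)\jp{\partial_{\mathbf n_F}^k\eta}$, so the two cancel.
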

\begin{proof}
We may take $0<h\le1$. Write $\Gamma=\partial\Omega$ and use the same
oriented normal on both traces of each interior face. Orthogonality of
$P_h$ and integration by parts on the actual physical cells give the
following exact identity, including the signs of the penalty terms:
\begin{align}
 (r(\phi),v)
 &=\sum_{K\in \cM}(\FF(\phi)-\FF(W),\nabla v)_K
   -\sum_{F\in\mathcal F_h^r\cup\mathcal F_h^e}\int_F
       \bigl(\FF(\phi)\cdot \mathbf{n}_F-\widehat F_{\mathbf{n}_F,F}(W^-,W^+)\bigr)\jp{v}
       \notag\\
 &\quad-\sum_{F\in \mathcal{F}_h^b}\int_F
       \bigl(\FF(0)\cdot n-\widehat F_{\mathbf{n}_F,F}(W,0)\bigr)v
       -S_h(W,v).\label{eq:residual-exact-expanded}
\end{align}
Here and below a sum over $K$ means the physical merged cells.

\paragraph{\bf Volume terms.}
Choose one point $z_K$ in the parent rectangle and set
$\mathbf b_K=\FF'(\phi(z_K))$. Taylor's formula in integral form gives
\begin{align*}
 \FF(\phi)-\FF(W)&=\mathbf b_K\eta+\mathbf R_K,\\
 \mathbf R_K&=\int_0^1
  \bigl(\FF'(\phi-(1-t)\eta)-\mathbf b_K\bigr)\eta\,dt.
\end{align*}
The extension has a uniformly bounded gradient; the macro-element diameter is
at most $Ch$, and $\|\eta\|_{L^\infty(K)}\le C\varepsilon_h$.
Consequently
\[
 |\mathbf R_K|\le C(h+|\eta|)|\eta|\le Ch\varepsilon_h,
 \qquad
 \left(\sum_{K\in \cM}\|\mathbf R_K\|_K^2\right)^{1/2}\le Ch\varepsilon_h.
\]
Each component of $\nabla v$ lies in $\mathbb Q_k$ on the macro-element. Thus $(\mathbf b_K\eta,\nabla v)_K=0$, even when $K$ is curved.
Cauchy--Schwarz and the gradient inverse inequality now give
\begin{equation}\label{eq:residual-volume-expanded}
 \left|\sum_{K\in \cM}(\F(\phi)-\F(W),\nabla v)_K\right|
 \le Ch\varepsilon_h\left(\sum_{K\in \cM}\|\nabla v\|_K^2\right)^{1/2}
 \le C\varepsilon_h\|v\|.
\end{equation}

\paragraph{\bf Regular faces and the parity cancellation.}
On a regular face set $\eta_s=(1-s)\eta^-+s\eta^+$ and
$a_0=\FF'(\phi(z_F))\cdot \mathbf n_F$, where $z_F$ is the face midpoint.
Since the exact trace is single-valued, the entropy-conservative
part of the flux satisfies
\begin{align*}
 \FF(\phi)-\FF^{\rm ec}(W^-,W^+)
 &=\FF'(\phi)\{\eta\}-\mathbf T_F,\\
 \mathbf T_F&=\int_0^1\int_0^1(1-t)
       \FF''(\phi-t\eta_s)\eta_s^2\,dt\,ds.
\end{align*}
Define
\[
 E_F=\bigl(\FF'(\phi)-\FF'(\phi(z_F))\bigr)\cdot \mathbf n_F\{\eta\}
             -\mathbf T_F\cdot \mathbf n_F.
\]
Then $\|E_F\|_{L^\infty(F)}\le C(h\varepsilon_h+\varepsilon_h^2)
\le Ch\varepsilon_h$, and, because $\jp{W}=-\jp{\eta}$,
\begin{equation}\label{eq:residual-regular-flux}
 \FF(\phi)\cdot \mathbf n_F-\widehat F_{\mathbf n_F,F}(W^-,W^+)
       =a_0\{\eta\}+E_F+\tfrac12\nu_F\jp{\eta}.
\end{equation}
The trace $\jp{v}$ belongs to the tangential polynomial space of degree
$k$. Therefore $\int_F\{\eta\}\jp{v}=\int_F P_F\{\eta\}\jp{v}$.
For even $k$, Lemma~\ref{lem:parity} says
$\|P_F\{\eta\}\|_{L^\infty(F)}\le Ch^{k+2}=Ch\varepsilon_h$.
Define in this case
\[
 C_F=-a_0P_F\{\eta\}-E_F-\tfrac12\nu_F\jp{\eta}.
\]
The negative of the flux-difference integral in
\eqref{eq:residual-regular-flux} equals $\int_F C_F\jp{v}$.
It is this projected tested coefficient, rather than the unprojected
pointwise flux difference, that satisfies
$\|C_F\|_{L^\infty(F)}\le Ch\varepsilon_h$.

For odd $k$, the smooth extension has zero jumps of all normal
 derivatives used here. Substituting $W=\phi-\eta$ in the last term of
\eqref{eq:residual-exact-expanded} gives exactly
\[
 -S_h(W,v)=c_kh^k\sum_{F\in\mathcal F_h^r}\int_F
 a_F(W)\bigl(\jp{\partial_{\mathbf n_F}^k\eta}\jp{v}-\jp{\eta}\jp{\partial_{\mathbf n_F}^kv}\bigr).
\]
The coefficient of $\jp{v}$, after combining the first summand with
\eqref{eq:residual-regular-flux}, is
\begin{align}
 C_F={}&-a_0\bigl(P_F\{\eta\}-c_kh^k\jp{\partial_{\mathbf n_F}^k\eta}\bigr)
       -E_F-\tfrac12\nu_F\jp{\eta}\notag\\
      &+c_kh^k\bigl(a_F(W)-a_0\bigr)\jp{\partial_{\mathbf n_F}^k\eta}.
       \label{eq:residual-corrected-coefficient}
\end{align}
The derivative jump in this formula is tangentially polynomial:
The exact derivative cancels between its two traces. The four terms
are bounded as follows:
\begin{align*}
 \|P_F\{\eta\}-c_kh^k\jp{\partial_{\mathbf n_F}^k\eta}\|_{L^\infty(F)}
       &\le Ch^{k+2},\\
 \|E_F\|_{L^\infty(F)}+\nu_F\|\jp{\eta}\|_{L^\infty(F)}
       &\le Ch\varepsilon_h,\\
 \|a_F(W)-a_0\|_{L^\infty(F)}&\le C(h+\varepsilon_h),\\
 h^k\|\jp{\partial_{\mathbf n_F}^k\eta}\|_{L^\infty(F)}&\le C\varepsilon_h.
\end{align*}
Thus $\|C_F\|_{L^\infty(F)}\le Ch\varepsilon_h$ for odd $k$ as well.
For either parity, weighted Cauchy--Schwarz and the skeleton measure
bound yield
\begin{align}
 \left|\sum_{F\in\mathcal F_h^r}\int_F C_F\jp{v}\right|
 &\le\left(\sum_{F\in\mathcal F_h^r}
           \nu_F^{-1}\|C_F\|_F^2\right)^{1/2}
       \left(\sum_{F\in\mathcal F_h^r}\nu_F\|\jp{v}\|_F^2\right)^{1/2}
       \notag\\
 &\le Ch\varepsilon_h\left(\frac{Ch^{-1}}{\nu_0h}\right)^{1/2}
       \Dh(v)^{1/2}
 \le C\varepsilon_h\Dh(v)^{1/2}.
 \label{eq:residual-weighted-expanded}
\end{align}
For even degrees, $C_F$ denotes the corresponding coefficient without
the correction terms.

The second odd-degree summand contains a derivative of the test
function. Scaling the fixed space on a regular square gives
\[
 \left(\sum_{F\in\mathcal F_h^r}
       \|\jp{\partial_{\mathbf n_F}^kv}\|_F^2\right)^{1/2}
       \le Ch^{-k-1/2}\|v\|.
\]
The improved odd-degree bound $\|\jp{\eta}\|_{L^\infty(F)}\le Ch^{k+2}$
is essential at this point. It implies
\begin{align}
 \left|c_kh^k\sum_{F\in\mathcal F_h^r}\int_F
            a_F(W)\jp{\eta}\jp{\partial_{\mathbf n_F}^kv}\right|
 &\le Ch^k h^{k+2}
       \left(\sum_{F\in\mathcal F_h^r}|F|\right)^{1/2}
       \left(\sum_{F\in\mathcal F_h^r}
                       \|\jp{\partial_{\mathbf n_F}^kv}\|_F^2\right)^{1/2}
       \notag\\
 &\le Ch^k h^{k+2}h^{-1/2}h^{-k-1/2}\|v\|
   =C\varepsilon_h\|v\|.\label{eq:residual-derivative-expanded}
\end{align}

\paragraph{\bf Exceptional faces and the physical boundary.}
For $F\in \mathcal F_h^e\cup \mathcal F_h^b$, $\nu_F=\nu_0$, let $W^+=v^+=0$ on $F\in \mathcal F_h^b$ and define
\[\delta_s=(1-s)\eta^-+ s\eta^+,\quad q_F=\FF(\phi)\cdot \mathbf {n}_F-\widehat F_{\mathbf n_F,F}(W^-,W^+).\]
Thus $(1-s)W^-+sW^+=\phi-\delta_s$, by the integration-by-part, we have
\begin{align*}
  q_F&=\int_0^1\int_0^1\FF'(\phi-t\delta_s)\cdot \mathbf{n}_F \delta_s \,dt\,ds+ \frac{\nu_0}{2}\jp{\eta}\\
  &\leq C(|\eta^-|+|\eta^+|)\leq C \varepsilon_h.
\end{align*}
By the Cauchy-Schwarz inequality and \eqref{eq:geometry-strip}, we have
\begin{align}
    -\sum_{F\in \mathcal{F}_h^e\cup \mathcal F_h^b}\int_{F}q_F\jp{v}&\leq \left(\sum_{F\in \mathcal{F}_h^e\cup \mathcal F_h^b}\nu_0^{-1} \|q_F\|_{L^2(F)}^2\right)^{1/2}\times \left(\sum_{F\in \mathcal{F}_h^e\cup \mathcal F_h^b}\nu_0\|\jp{v}\|_{L^2(F)}^2\right)^{1/2} \notag\\
    &\leq C\varepsilon_h \left(\sum_{F\in \mathcal{F}_h^e\cup \mathcal F_h^b}|F|\right)^{1/2}\Dh(v)^{1/2}\leq C \varepsilon_h \Dh(v)^{1/2}\label{est:FeFb}
\end{align}

Combining this with
\eqref{eq:residual-volume-expanded},
\eqref{eq:residual-weighted-expanded},
\eqref{eq:residual-derivative-expanded} proves the weak estimate.
Finally, the face trace inequality and $\nu_F\le C$ imply
$\Dh(v)\le Ch^{-1}\|v\|^2$. For $r\ne0$, choose
$v=r/\|r\|$ in the weak estimate. This gives
$\|r\|\le C\varepsilon_h(1+h^{-1/2})\le C\varepsilon_h h^{-1/2}$;
The assertion is immediate when $r=0$.
\end{proof}

\subsection{Energy identities and semidiscrete convergence}
\begin{lemma}[Exact and relative energy]\label{lem:relative}
For every $w\in\Vh^k$,
\begin{equation}\label{eq:exact-energy}
 \ip{\Lh(w)}{w}=-\tfrac12\Dh(w).
\end{equation}
For $W=\Ph\phi$ as in Lemma~\ref{lem:weak-residual} and every $e\in\Vh^k$,
\begin{equation}\label{eq:relative-energy}
 \left|\ip{\Lh(W+e)-\Lh(W)}{e}+\tfrac12\Dh(e)\right|
 \leq C\norm{e}^2.
\end{equation}
The constant is independent of the size of $e$ for the extended flux.
\end{lemma}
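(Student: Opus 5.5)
For \eqref{eq:exact-energy} I would test \eqref{eq:operator} with $v=w$ and evaluate each piece through the entropy potential. Let $\Psi(u)=\int_0^u \FF(s)\,ds$, so that $(\FF(w),\nabla w)_K=\int_K\nabla\cdot\Psi(w)=\int_{\partial K}\Psi(w)\cdot\mathbf n$.

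Summing over cells, each interior face $F$ contributes $\bigl(\Psi(w^-)-\Psi(w^+)\bigr)\cdot\mathbf n_F$, and each boundary face contributes $\Psi(w)\cdot\mathbf n_F$.

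The entropy-conservative flux satisfies the exact identity
\[
\FF^{\rm ec}(a,b)(a-b)=\int_0^1\FF((1-s)a+sb)(a-b)\,ds=\Psi(a)-\Psi(b).
\]
Hence on interior faces the central part of $-\int_F\widehat F\jp{w}$ cancels the volume contribution exactly. The penalty part leaves $-\tfrac12\nu_F\|\jp w\|_F^2$.

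On $F\in\Fh^b$ one has $\FF^{\rm ec}(w,0)\,w=\Psi(w)-\Psi(0)=\Psi(w)$, since $\Psi(0)=0$. The boundary term therefore cancels as well, leaving $-\tfrac12\nu_0\|w\|_F^2$.

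Finally $\Sh(w,w)=0$ by skew symmetry. Summing gives $-\tfrac12\Dh(w)$, and the energy identity \eqref{eq:semidiscrete-energy} follows by integrating in time.

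For \eqref{eq:relative-energy} I would write $w=W+e$ and split the difference into four groups.

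\textbf{Volume terms.} These read $\sum_K(\FF(W+e)-\FF(W),\nabla e)_K$. Write $\FF(W+e)-\FF(W)=\FF'(W)e+\mathbf Q$ with $|\mathbf Q|\le C|e|^2$, using the bounded second derivatives of the extended flux.

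\textbf{Interior face terms.} Expand $\FF^{\rm ec}(W^-+e^-,W^++e^+)-\FF^{\rm ec}(W^-,W^+)$ to first order in $e$. The penalty part gives exactly $-\tfrac12\Dh(e)$, because it is linear.

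The key idea is to compare with the exact computation for the frozen linear problem. By a relative-entropy argument, the linearized terms combine into
\[
\tfrac12\sum_K(\nabla\cdot\FF'(W)\,e,e)_K
\]
plus face terms. These face terms involve $\jp W$ and the gradients of $\FF'$ along $W$. Lemma~\ref{lem:projection} gives $\|\nabla W\|_{L^\infty}\le C$, so the volume part is at most $C\|e\|^2$.

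Concretely, using $\FF^{\rm ec}_a(a,b)=\int_0^1(1-s)\FF'\,ds$ and its symmetric counterpart, the face linearization evaluated at $a=b$ gives $\tfrac12\FF'(W)\cdot\mathbf n$ on each side. Combined with the volume integration by parts of $\FF'(W)e\cdot\nabla e=\tfrac12\FF'(W)\cdot\nabla(e^2)$, this cancels up to a remainder of size $|\jp W|\,|e|^2$.

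\textbf{Quadratic remainders.} These are of the form $|e|^2|\nabla e|$ in the volume and $|e|^3$ on faces. Here I would not rely on smallness of $e$. Instead I would use the first-order Taylor expansion around the exact neighbouring state, bounding differences like $\FF'(W+\theta e)-\FF'(W)$ simply by $2\sup|\FF'|$ where needed. The trouble is that this produces terms of the form $\int|e||\nabla e|$, which are only $h^{-1}\|e\|^2$ by Lemma~\ref{lem:geometry-inverse}.

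\textbf{The main obstacle.} The flux-difference structure has to be organized so that no $\nabla e$ survives. The route I would take is to write
\[
(\FF(W+e)-\FF(W),\nabla e)_K=\int_K\nabla\cdot\bigl[\Psi(W+e)-\Psi(W)-\FF(W)e\bigr]+\int_K\bigl(\FF'(W+e)-\FF'(W)\bigr)\nabla W\,e-\ldots
\]
This is an exact identity obtained from the chain rule. The interior integrals then involve only $\nabla W$, which is bounded, times terms of size $|e|^2$, because $|\FF'(W+e)-\FF'(W)|\,|e|\le C|e|^2$ for the extended flux.

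The boundary integrals become relative entropy fluxes. On each interior face, these combine with the $\FF^{\rm ec}$ differences into a quantity bounded by $C|\jp W|\,|e|^2$, together with a nonpositive or exactly cancelling part; checking this is the delicate algebra. Using $|\jp W|\le C\epsh$ and the summed trace inequality \eqref{eq:geometry-summed-trace}, the face part is at most $C\epsh h^{-1}\|e\|^2\le C\|e\|^2$. The same estimate works on the physical boundary, where $W=O(\epsh)$.

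\textbf{The skew term.} $\Sh(W+e,e)-\Sh(W,e)$ is the remaining piece. Its bilinear part in $e$ vanishes by skew symmetry. What remains involves $a_F(W+e)-a_F(W)$ multiplied by $\jp{\partial^k W}\jp e$ or by $\jp W\jp{\partial^k e}$. This is bounded using Lemma~\ref{lem:parity} ($h^k\jp{\partial^k_{\mathbf n_F}W}=O(\epsh)$, after subtracting the continuous exact derivative, and $\jp W=O(h^{k+2})$), the inverse estimates, and $|a_F(W+e)-a_F(W)|\le C|e|$, which give $O(\|e\|^2)$.
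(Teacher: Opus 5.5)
Your final route, once you drop the linearization in the middle paragraphs, is essentially the paper's proof. The common ingredients are the potential $\mathbf H(W,e)=\mathbf\Psi(W+e)-\mathbf\Psi(W)-\FF(W)e$, a chain-rule cell identity that leaves only $\nabla W$ times an $O(|e|^2)$ term, and face terms controlled by $|\jp{W}|\le C\epsh$ together with \eqref{eq:geometry-summed-trace}. The skew difference is reduced to $a_F(W+e)-a_F(W)$ times the skew form at $(W,e)$ and bounded with the parity lemma and inverse estimates. Your exact-energy argument also coincides with the paper's.

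Two points need tightening. First, the cell identity is exactly
\[
 (\FF(W+e)-\FF(W))\cdot\nabla e=\nabla\cdot\mathbf H(W,e)-\mathbf Q(W,e)\cdot\nabla W,
 \qquad \mathbf Q(W,e)=\FF(W+e)-\FF(W)-\FF'(W)e,
\]
because $\partial_W\mathbf H=\mathbf Q$ and $\partial_e\mathbf H=\FF(W+e)-\FF(W)$. Your term $+(\FF'(W+e)-\FF'(W))e\,\nabla W$ is not the right one, although it obeys the same bound.

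Second, and more importantly, the face step you defer as ``delicate algebra'' closes exactly, with no nonpositive leftover. Put $a=W^-$, $b=W^+$, $p=e^-$, $q=e^+$, $W_s=(1-s)a+sb$ and $e_s=(1-s)p+sq$. Then
\[
 \frac{d}{ds}\mathbf H(W_s,e_s)=-(a-b)\mathbf Q(W_s,e_s)-(p-q)\bigl(\FF(W_s+e_s)-\FF(W_s)\bigr).
\]
Integrate over $[0,1]$ and use $\int_0^1\FF(W_s+e_s)\,ds=\FF^{\rm ec}(a+p,b+q)$ to obtain
\[
 \mathbf H(a,p)-\mathbf H(b,q)-(p-q)\bigl(\FF^{\rm ec}(a+p,b+q)-\FF^{\rm ec}(a,b)\bigr)
 =(a-b)\int_0^1\mathbf Q(W_s,e_s)\,ds .
\]
This identity holds also when $a=b$, since no divided differences are used, and it applies on $\Gamma$ with $b=q=0$ because $\mathbf H(0,0)=0$. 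For the extended flux, $|\mathbf Q(W_s,e_s)|\le C(|e^-|+|e^+|)^2$. This gives precisely the $C|\jp{W}|\,|e|^2$ face bound you predicted, and the rest of your argument then goes through.
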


\begin{proof}
\leavevmode\par
\paragraph{\bf Exact energy.}
Define the vector-valued primitive $\mathbf \Psi(s)=\int_0^s\FF(z)\,dz$.
The fundamental theorem of calculus, also at $a=b$ by continuity,
gives
\[
 (a-b)\FF^{\rm ec}(a,b)=\mathbf \Psi(a)-\mathbf \Psi(b),\qquad
 \FF(w)\cdot\nabla w=\nabla\cdot\mathbf \Psi(w).
\]
Summing integration by parts over the cells, the interior boundary
term is $\sum_F\int_F(\mathbf\Psi(w^-)-\mathbf\Psi(w^+))\cdot \mathbf n_F$.
It cancels the entropy-conservative numerical flux tested against
$\jp{w}$. The exterior boundary term cancels as well, since
$w\F^{\rm ec}(w,0)=\mathbf \Psi(w)-\mathbf \Psi(0)=\Psi(w)$.
Each viscosity term contributes $-\nu_F\|\jp{w}\|_F^2/2$,
and the boundary contributes $-\nu_0\|w\|_\Gamma^2/2$.
The skew integrand is zero when its two arguments agree. These
identities prove \eqref{eq:exact-energy}.

\paragraph{\bf A relative identity without divided differences.}
Define
\begin{align*}
 \mathbf H(W,e)&=\mathbf \Psi(W+e)-\mathbf \Psi(W)-\mathbf \F(W)e,\\
 \mathbf Q(W,e)&=\mathbf \F(W+e)-\F(W)-\mathbf \F'(W)e
       =\int_0^1(1-t)\mathbf \F''(W+te)e^2\,dt.
\end{align*}
Direct differentiation gives
$\partial_W \mathbf H=\mathbf Q(W,e)$ and
$\partial_e \mathbf H=\mathbf \F(W+e)-\mathbf \F(W)$. Therefore, the exact cell identity is
\begin{equation}\label{eq:relative-cell-expanded}
 (\mathbf \F(W+e)-\mathbf \F(W))\cdot\nabla e
  =\nabla\cdot \mathbf H(W,e)-\mathbf Q(W,e)\cdot\nabla W.
\end{equation}
On a face let $a=W^-$, $b=W^+$, $p=e^-$, $q=e^+$, and set
$W_s=(1-s)a+sb$, $e_s=(1-s)p+sq$. The chain rule gives
\[
 \frac{d}{ds}\mathbf H(W_s,e_s)
 =-(a-b)\mathbf Q(W_s,e_s)
   -(p-q)\bigl(\mathbf \F(W_s+e_s)-\mathbf \F(W_s)\bigr).
\]
Integrating from zero to one and rearranging proves exactly
\begin{align}
 &\mathbf H(a,p)-\mathbf H(b,q)
 -(p-q)\bigl(\mathbf \F^{\rm ec}(a+p,b+q)-\mathbf \F^{\rm ec}(a,b)\bigr)
 \notag\\
 &\hspace{30mm}=(a-b)\int_0^1\mathbf Q(W_s,e_s)\,ds.
 \label{eq:relative-face}
\end{align}
No division by $a-b$ was used. This identity therefore includes
$a=b$. At a boundary face, take $b=q=0$; then $\mathbf H(0,0)=0$ and
\eqref{eq:relative-face} is the corresponding boundary identity.

Let $W^+=e^+=0$ on the boundary face. Combining
\eqref{eq:relative-cell-expanded}--\eqref{eq:relative-face} gives
\begin{align}
 &(\mathcal L_h(W+e)-\mathcal L_h(W),e)
       +\frac12\Dh(e)\notag\\
 &=-\sum_{K\in \cM}\int_K \mathbf Q(W,e)\cdot\nabla W
   +\sum_{F\in\mathcal F_h}\int_F\jp{W}
                     \int_0^1\mathbf Q(W_s,e_s)\cdot\mathbf  n_F\,ds
   +\mathcal S(W,e),\label{eq:relative-exact-expanded}
\end{align}
where $\mathcal S(W,e)=S_h(W+e,e)-S_h(W,e)$.
The bound $|\mathbf Q(W,e)|\le \frac12\|\mathbf \F''\|_\infty |e|^2$ and
$\|\nabla W\|_\infty\le C$ control the volume term by $C\|e\|^2$.
For the faces, define $\sigma_F(e)=|e^-|+|e^+|$. Projection
approximation and continuity of the exact trace give
$|\jp{W}|\le C\varepsilon_h$ on $\mathcal F_h$, including $\Gamma$.
Since $|e_s|\le\sigma_F(e)$, the face term is bounded by
\begin{equation}\label{eq:relative-face-bound-expanded}
 C\varepsilon_h\sum_{F\in\mathcal F_h}\|\sigma_F(e)\|_F^2
 \le C\varepsilon_h h^{-1}\|e\|^2\le C\|e\|^2.
\end{equation}

\paragraph{\bf The odd-degree term.}
For odd $k$ define the pointwise skew expression
\[
 B_F(z,v)=c_kh^k\bigl(\jp{\partial_{\mathbf n_F}^kz}\jp{v}-\jp{z}\jp{\partial_{\mathbf n_F}^kv}\bigr).
\]
Bilinearity and $B_F(e,e)=0$ imply
\[
 \mathcal S(W,e)=\sum_{F\in\mathcal F_h^r}\int_F
        (a_F(W+e)-a_F(W))B_F(W,e).
\]
The mean-value formula gives
$|a_F(W+e)-a_F(W)|\le C\sigma_F(e)$.
The parity lemma, applied to the smooth reference, gives
\[
 h^k|\jp{\partial_{\mathbf n_F}^kW}|\le C\varepsilon_h,\qquad
 |\jp{W}|\le Ch^{k+2}\quad\hbox{on regular faces}.
\]
Thus the first part of $\mathcal S$ is at most
$C\varepsilon_h\sum_F\|\sigma_F(e)\|_F^2\le Ch^k\|e\|^2$.
For the second part, Cauchy--Schwarz and derivative traces give
\begin{align*}
 Ch^k h^{k+2}\sum_{F\in \mathcal{F}_h^r}\int_F\sigma_F(e)|\jp{\partial_{\mathbf n_F}^ke}|
 &\le Ch^{2k+2}(Ch^{-1/2}\|e\|)(Ch^{-k-1/2}\|e\|)\\
 &\le Ch^{k+1}\|e\|^2.
\end{align*}
For even $k$, $\mathcal S=0$. Substitution in
\eqref{eq:relative-exact-expanded} proves the asserted two-sided
bound.
\end{proof}

\paragraph{\bf The proof of Theorem 3.1}
The semidiscrete equation is $U_t=\mathcal L_h(U)$ and $U(0)=P_hu_0$.
Testing with $U$ yields
$\frac{d}{dt}\|U\|^2=-\Dh(U)$.
Integration from $0$ to $t$ proves the exact norm identity in \eqref{eq:semidiscrete-energy}.

Put $W=P_hu$, $e=U-W$, and
$r=P_hu_t-\mathcal L_h(W)=r(u)$. The projector is independent of
time, so
\[
 e_t=\mathcal L_h(W+e)-\mathcal L_h(W)-r,\qquad e(0)=0.
\]
Let $E(t)=\|e(t)\|^2$. The preceding two lemmas imply
\[
 \frac12E_t+\frac12\Dh(e)
 \le C_0E+C_1\varepsilon_h E^{1/2}
                +C_1\varepsilon_h\Dh(e)^{1/2}.
\]
Use $ab\le(a^2+b^2)/2$ on the middle term and
$ab\le a^2/4+b^2$ on the last term, taking
$a=\Dh(e)^{1/2}$ there. For fixed constants $C_2,C_3$,
\begin{equation}\label{eq:semidiscrete-differential-expanded}
 \frac12E_t+\frac14\Dh(e)
       \le C_2E+C_3\varepsilon_h^2.
\end{equation}
Dropping the nonnegative dissipation and multiplying by
$\exp(-2C_2t)$ gives
\[
 \frac{d}{dt}\bigl(e^{-2C_2t}E(t)\bigr)
       \le2C_3\varepsilon_h^2e^{-2C_2t}.
\]
Since $E(0)=0$, integration implies
$E(t)\le2C_3T e^{2C_2T}\varepsilon_h^2$ for $0\le t\le T$.
Integrating \eqref{eq:semidiscrete-differential-expanded}, using
this bound and $E(T)\ge0$, also gives
$\int_0^T\Dh(e)\,dt\le C_T\varepsilon_h^2$.
Finally,
$\|U-u\|\le\|e\|+\|P_hu-u\|\le C_T\varepsilon_h$.

For completeness, these estimates apply throughout $[0,T]$.
For fixed $h$ the mass matrix is positive definite, and the
extended-flux vector field is smooth in the finite-dimensional
coefficient vector. Local existence and uniqueness therefore hold.
If the maximal existence time were finite, the exact energy identity
would keep $U$ in a bounded coefficient set, which is compact in
finite dimension. The smooth vector field is bounded and Lipschitz
on a neighborhood of that set, permitting continuation beyond the
maximal time, a contradiction. The extension consequently has a
global solution. Moreover
\[
 \|U-u\|_{L^\infty(\Omega)}
 \le Ch^{-1}\|e\|+\|P_hu-u\|_{L^\infty(\Omega)}
 \le C_T(h^k+h^{k+1})\longrightarrow0.
\]
Choose $h$ so this bound is smaller than the fixed margin of the
flux agreement interval. The extended and original
operators then coincide along the whole solution, proving the
claim for the original smooth flux.
\begin{remark}
To suppress spurious oscillations near discontinuities, we augment
the DG discretization~\eqref{eq:operator} with a damping term.
Specifically, the OFDG operator
$\Lh^{\rm OF}:\Vh^k\to\Vh^k$ is defined by
\begin{equation}\label{eq:operator_OFDG}
 \ip{\Lh^{\rm OF}(w)}{v}
 =
 \ip{\Lh(w)}{v}-\Sigma_h(w,v),
 \qquad \forall v\in\Vh^k,
\end{equation}
where
\begin{equation}\label{eq:damping_term}
 \Sigma_h(w,v)
 =
 \sum_{K\in\cM}\sum_{\ell=0}^{k}
 \frac{\sigma_K^\ell(w)}{h}
 \int_K \bigl(w-P_h^{\ell-1}w\bigr)v\,dx.
\end{equation}
The damping coefficient $\sigma_K^\ell(w)$ is given by
\begin{equation}\label{damping_coeff}
 \sigma_K^\ell(w)
 =
 \frac{2(2\ell+1)}{2k-1}\frac{h^\ell}{\ell!}
 \sum_{|\bm{\alpha}|=\ell}
 \left(
   \frac{1}{N_e}
   \sum_{\mathbf{v}\in\mathcal V(K)}
   \left(
     \left.\jp{\partial^{\bm{\alpha}}w}\right|_{\mathbf{v}}
   \right)^2
 \right)^{1/2}.
\end{equation}
Here, $\bm{\alpha}$ is a multi-index, $\mathcal V(K)$ denotes
the set of vertices of $K$, and $N_e$ is the number of edges
of $K$. The quantity $\left.\jp{w}\right|_{\mathbf{v}}$ denotes
the jump of $w$ at the vertex $\mathbf{v}$ between $K$ and its
neighboring elements. For $\ell\ge0$, $P_h^\ell$ denotes the
standard cellwise $L^2$ projection onto $\Vh^\ell$, with the
convention $P_h^{-1}=P_h^0$.

By arguments analogous to those in~\cite{LuLiuShu2021SINUM},
the semidiscrete OFDG scheme $U_t=\Lh^{\rm OF}(U)$ retains
conservation and $L^2$ stability and achieves an optimal
$O(h^{k+1})$ error estimate for smooth solutions.
The numerical experiments include accuracy tests for smooth
solutions and tests involving discontinuities to assess
the convergence rates and the suppression of spurious oscillations.
\end{remark}

\subsection{Linear advection with inflow data}\label{sec:linear-inflow}
For constant vector $\boldsymbol\beta$, the interior method also admits the
usual inflow boundary condition. Put $b=\boldsymbol\beta\cdot \mathbf n$,
$b^+=\max(b,0)$, $b^-=\min(b,0)$, and prescribe $g$ on
$\Gamma_-=\{b<0\}$. Replace the boundary flux by
$b^+w+b^-g$. Write the resulting affine operator as
$A_hw+\ell_h(t)$ and set
\[
 \Dh^{\rm up}(v)=\sum_{F\in\Fh^r\cup\Fh^e}\nu_F\norm{\jp{v}}_F^2
                 +\sum_{F\in \Fh^b}\int_F|b|v^2.
\]
\begin{theorem}\label{thm:linear-inflow}
On the mesh of Section~\ref{sec:geometry}, let $u$ solve
$u_t+\boldsymbol\beta\cdot\nabla u=0$ with inflow trace $g$ and
a bounded $C^{k+2}$ space--time extension. The semidiscrete method initialized by
$\Ph u_0$ satisfies $\sup_{t\leq T}\norm{U-u}\leq C_T h^{k+1}$
for every $k\geq1$. Its exact energy identity is
\begin{equation}\label{eq:inflow-energy}
 \frac12\frac{d}{dt}\norm{U}^2+\frac12\Dh^{\rm up}(U)
 =\int_{\Gamma_-}|b|gU.
\end{equation}
In particular,
\[
 \norm{U(t)}^2\le\norm{\Ph u_0}^2
                +\int_0^t\int_{\Gamma_-}|b|g^2\,ds\,dt'.
\]
\end{theorem}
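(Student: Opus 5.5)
I will follow the proof of Theorem~\ref{thm:main}: an energy identity first, then a weak residual bound, then a Gr\"onwall argument for the discrete error. Linearity removes the need for flux extensions and for the relative-energy lemma.

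\emph{Energy identity.} With $\FF(u)=\boldsymbol\beta u$, the operator reads
\[
(A_hw+\ell_h,v)=\sum_K(\boldsymbol\beta w,\nabla v)_K-\sum_{F\in\Fh^r\cup\Fh^e}\int_F\bigl(\boldsymbol\beta\cdot\mathbf n_F\{w\}+\tfrac12\nu_F\jp{w}\bigr)\jp{v}-\sum_{F\in\Fh^b}\int_F(b^+w+b^-g)v+\Sh(w,v),
\]
where $a_F=\boldsymbol\beta\cdot\mathbf n_F$ is constant. Take $v=w=U$. The volume term becomes $\tfrac12\int_{\partial K}bU^2$ on each cell. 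On interior faces these boundary contributions cancel the central flux exactly, because $\{U\}\jp{U}=\tfrac12\jp{U^2}$. The viscosity leaves $-\tfrac12\nu_F\|\jp{U}\|_F^2$, and $\Sh(U,U)=0$. On $\Gamma$ we are left with $\tfrac12 bU^2-b^+U^2-b^-gU=-\tfrac12|b|U^2+|b|gU\chi_{\Gamma_-}$. This gives \eqref{eq:inflow-energy}. Applying Young's inequality $|b|gU\le\tfrac12|b|U^2+\tfrac12|b|g^2$ and integrating in time yields the stated $L^2$ bound.

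\emph{Weak residual.} Set $W=\Ph u$ and $r=\Ph u_t-A_hW-\ell_h$. Because $u_t=-\boldsymbol\beta\cdot\nabla u$, integration by parts on the physical cells gives the analogue of \eqref{eq:residual-exact-expanded}, with two simplifications. The volume term $(\boldsymbol\beta\eta,\nabla v)_K$ vanishes exactly, since $\nabla v\in\mathbb Q_k$. The regular-face terms are handled by Lemma~\ref{lem:parity} exactly as before, with $E_F=0$ and $a_F(W)=a_0$. The exceptional faces carry the coefficient $a_0\{\eta\}+\tfrac12\nu_0\jp{\eta}=O(\varepsilon_h)$, which Lemma~\ref{lem:geometry-strip} controls as in \eqref{est:FeFb}.

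The one genuinely new term is the boundary residual
\[
\int_\Gamma\bigl(bu-b^+W-b^-g\bigr)v=\int_\Gamma b^+\eta\,v,
\]
using $g=u$ on $\Gamma_-$. Here $\eta=O(\varepsilon_h)$ pointwise by Lemma~\ref{lem:projection}, and $|\Gamma|\le C$. Cauchy--Schwarz weighted by $b^+\le|b|$ therefore gives the bound $C\varepsilon_h(\int_\Gamma|b|v^2)^{1/2}\le C\varepsilon_h\Dh^{\rm up}(v)^{1/2}$. No zero-trace assumption is needed, because the inflow data are exact and the outflow flux is upwind. The main obstacle is checking that this term is indeed controlled by the boundary part of $\Dh^{\rm up}$ and not merely by $h^{-1/2}\|v\|$. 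Where $b$ vanishes, $b^+\le|b|$ still holds, so the weighting works.

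\emph{Conclusion.} The error $e=U-W$ satisfies $e_t=A_he-r$ with $e(0)=0$, since the data $g$ enter $\ell_h$ identically. By the homogeneous version of the energy identity, $(A_he,e)=-\tfrac12\Dh^{\rm up}(e)$. Combined with the residual bound this gives
\[
\tfrac12E_t+\tfrac12\Dh^{\rm up}(e)\le C\varepsilon_h\bigl(\|e\|+\Dh^{\rm up}(e)^{1/2}\bigr),\qquad E=\|e\|^2.
\]
Young's inequality and Gr\"onwall's inequality, as in \eqref{eq:semidiscrete-differential-expanded}, then give $\|e\|\le C_T\varepsilon_h$. The triangle inequality with Lemma~\ref{lem:projection} finishes the proof. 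The $C^{k+2}$ regularity suffices, since only $W^{k+2,\infty}$ bounds on $u$ and $u_t$ enter; for $u_t$ the same cellwise projection is used.
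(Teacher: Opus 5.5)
Your proposal is correct and follows the paper's proof essentially step for step. The steps are: the exact energy identity from interior cancellation plus the boundary computation; the unchanged interior residual analysis of Lemma~\ref{lem:weak-residual}; the key boundary identity $bu-(b^+W+b^-g)=b^+(u-W)$, controlled by the $|b|$-weighted boundary part of $\Dh^{\rm up}$; and Young--Gr\"onwall for $e=U-\Ph u$. One minor point: no $W^{k+2,\infty}$ bound on $u_t$ is actually needed, since $(\Ph u_t,v)=(u_t,v)=-(\boldsymbol\beta\cdot\nabla u,v)$ for every $v\in\Vh^k$.
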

\begin{proof}
Interior cancellation and boundary integration by parts give
$\ip{A_hv}{v}=-\Dh^{\rm up}(v)/2$ and
$\ip{\ell_h(t)}{v}=\int_{\Gamma_-}|b|gv$, proving
\eqref{eq:inflow-energy}. Completing the inflow square gives
\begin{align*}
 \frac12\frac{d}{dt}\norm{U}^2
 &+\frac12\sum_{F\in\Fh^r\cup\Fh^e}\nu_F\norm{[U]}_F^2
 +\frac12\int_{\Gamma_+}bU^2\\
 &+\frac12\int_{\Gamma_-}|b|(U-g)^2
 =\frac12\int_{\Gamma_-}|b|g^2,
\end{align*}
where $\Gamma_+=\{b>0\}$. Integration proves the stated energy bound.
With $W=\Ph u$, the exact boundary-flux
difference is
$bu-(b^+W+b^-g)=b^+(u-W)$, since $u=g$ on inflow.
Consequently, its tested residual is at most
\[
 \left(\int_\Gamma b^+(u-W)^2\right)^{1/2}
 \left(\int_\Gamma b^+v^2\right)^{1/2}
 \leq C\epsh\Dh^{\rm up}(v)^{1/2}.
\]
The interior proof of Lemma~\ref{lem:weak-residual} is unchanged.
Thus $e=U-\Ph u$ satisfies
$\tfrac12 \frac{d}{dt}\norm{e}^2+\tfrac12\Dh^{\rm up}(e)=-(r,e)$;
Young's inequality and Gronwall's inequality prove the claim.
\end{proof}

\section{Numerical experiments}\label{sec:numerics}

We test the method for a nonlinear equation on a disk and for linear
advection on a smooth nonconvex domain. Both computations use a uniform
Cartesian background mesh, the merging parameter $\delta_0=0.2$, and
$\nu_0=0.1$. The approximation space is $\mathbb Q_k$ on each physical
cell, with $k=1,2,3$. The correction $S_h$ is included for $k=1,3$;
it vanishes for $k=2$. No limiter or filter is applied.
The initial value is the physical $L^2$ projection.
Tensor Legendre bases are scaled to the Cartesian bounding box of
each physical cell, and the full physical mass matrix is used.
This rescaling changes the basis without changing the polynomial space. The time discretization is the explicit third-order Runge-Kutta method.  

The boundary is represented by its analytic parametrization.
Polynomial integrals on ordinary rectangles and straight faces are
evaluated with $\lceil(3k+1)/2\rceil$ Gaussian points per coordinate
for Burgers' equation and $k+2$ points for linear advection.
These rules integrate the polynomial operator products exactly.
Integrals on curved
cells are evaluated by high-order quadrature on smooth parameter
intervals; there is no polygonal approximation of the boundary.
These computations therefore use a numerical approximation to the
exact integration assumed in the analysis. Quadrature refinement is
reported below. Errors are evaluated against the analytic solution
with a separate higher-order rule.
The observed order between consecutive meshes is
\begin{equation}\label{eq:numerical-rate}
 p_j=\frac{\log(E_{j-1}/E_j)}{\log(h_{j-1}/h_j)},
 \qquad E_j=\|u(T)-u_h(T)\|_{L^2(\Omega)}.
\end{equation}
The reported computations were partially carried out using the
high-performance computing facilities of the State Key Laboratory of Scientific and Engineering Computing, Chinese Academy of Sciences.
\subsection{Burgers equation on the unit disk}\label{subsec:burgers-disk}
Consider
\begin{equation}\label{eq:disk-test}
 \begin{split}
 u_t+\partial_x(u^2/2)+\partial_y(u^2/2)=0,
 \qquad &\Omega=\{(x,y):x^2+y^2<1\},\\
 &u(0,x,y)=1-x^2-y^2.
 \end{split}
\end{equation}
We use the homogeneous boundary trace and compute to $T=0.2$.
The exact solution is
\begin{equation}\label{eq:disk-exact}
 u(t,x,y)=\frac{2d}{q+\sqrt{q^2+8t^2d}},
 \qquad d=1-x^2-y^2,\quad q=1-2t(x+y).
\end{equation}
Indeed, the characteristics satisfy
$(x,y)=(\xi,\eta)+t(1-\xi^2-\eta^2)(1,1)$, which gives
$2t^2u^2+qu-d=0$.
In the coordinates $z=(\xi+\eta)/\sqrt2$ and
$w=(\xi-\eta)/\sqrt2$, the characteristic map on a disk chord is
$z\mapsto z+\sqrt2t(1-w^2-z^2)$. Its derivative is at least
$1-2\sqrt2T>0$, and it fixes both endpoints.
Thus the solution is smooth on the closed disk throughout the test.
Although its boundary trace vanishes, its normal derivative is
$-2/[1-2t(x+y)]$ on the circle, so the approximation error is not
confined away from the cut cells.

The background box is $[-1.2,1.2]^2$, with $h=2.4/N$ and
$N=16,32,64,128$. The corresponding numbers of physical cells are
$148,560,2268,8976$. Figure ~\ref{fig:disk-mesh} shows the merged mesh. On the finest mesh, the smallest unmerged
intersection has area $0.00072507h^2$, whereas the smallest final
physical cell has area $0.033601h^2$.
The normal numerical flux specializes to
\begin{equation}\label{eq:burgers-test-flux}
 \widehat F_{\mathbf n_F,F}(a,b)
 =(n_x+n_y)\frac{a^2+ab+b^2}{6}+\frac{\nu_F}{2}(a-b).
\end{equation}
The coefficient in $S_h$ is $(n_x+n_y)\{u_h\}$.
The time step is $\tau=T/m$, where
$m=\lceil T/(0.04h^{\max(1,(k+1)/3)})\rceil$. This choice makes the formal third-order time error smaller
than the spatial error.
\begin{figure}[tb]
\centering
\includegraphics[width=0.8\textwidth]{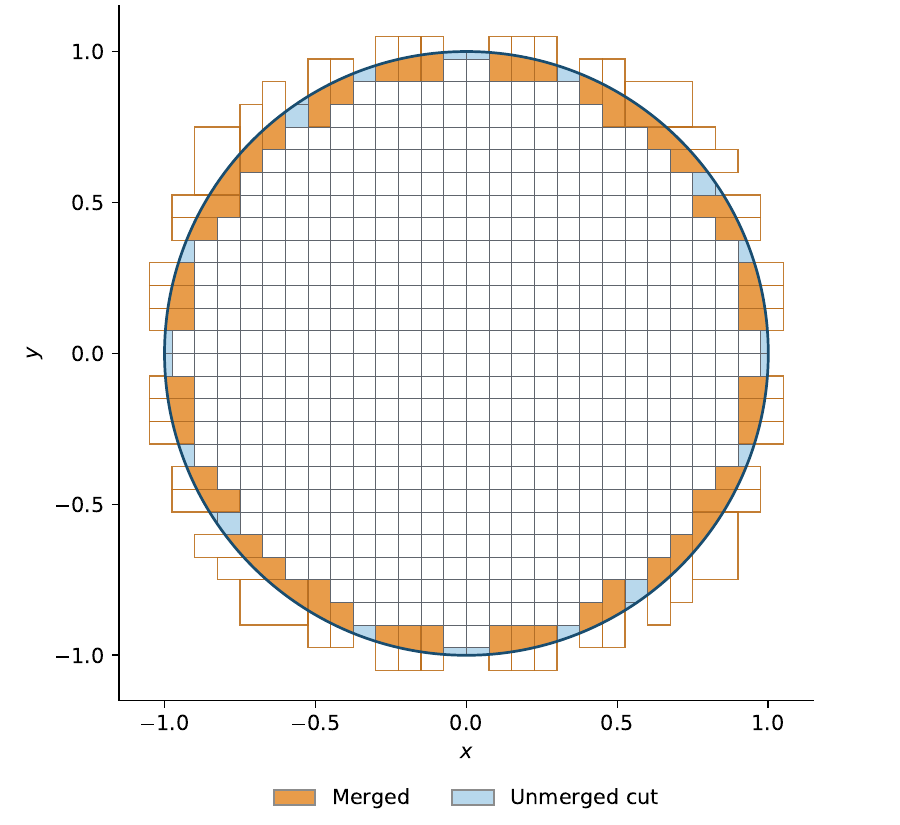}
\caption{ Unit disk domain with $N=32$.}
\label{fig:disk-mesh}
\end{figure}
\begin{table}[tb]
\centering\small\setlength{\tabcolsep}{4pt}
\caption{Burgers equation on the disk: $L^2$ errors at $T=0.2$.}
\label{tab:disk-errors}
\begin{tabular}{r rr rr rr}
\toprule
&\multicolumn{2}{c}{$k=1$}&\multicolumn{2}{c}{$k=2$}&\multicolumn{2}{c}{$k=3$}\\
$N$&Error&Order&Error&Order&Error&Order\\
\midrule
16  &$1.1215\times10^{-2}$&--    &$6.5156\times10^{-4}$&--    &$7.5471\times10^{-5}$&--\\
32  &$2.4281\times10^{-3}$&2.208 &$7.5813\times10^{-5}$&3.103 &$4.9260\times10^{-6}$&3.937\\
64  &$5.0067\times10^{-4}$&2.278 &$7.8163\times10^{-6}$&3.278 &$2.6164\times10^{-7}$&4.235\\
128 &$1.2016\times10^{-4}$&2.059 &$1.0338\times10^{-6}$&2.918 &$1.8487\times10^{-8}$&3.823\\
\bottomrule
\end{tabular}
\end{table}

Table~\ref{tab:disk-errors} shows second-, third-, and fourth-order
spatial convergence. The endpoint orders from $N=16$ to $128$ are
$2.181$, $3.100$, and $3.998$.
The individual rates vary as the cut positions and merging patterns
change. 
Increasing the curved quadrature from $24$ to $48$ angular points
per smooth piece changes the $k=3$, $N=16$ solution by
$3.6517\times10^{-14}$.

\subsection{Burgers equation on a disk with inflow boundary conditions}
\label{subsec:burgers-disk-inflow}

We again consider the two-dimensional Burgers equation on a disk,
now with inflow boundary data prescribed by the corresponding
whole-space entropy solution:
\begin{equation}\label{2D Burgers equation}
\begin{cases}
u_t + \frac{1}{2}(u^2)_x + \frac{1}{2}(u^2)_y = 0,
& (x,y)\in\Omega,\quad t>0, \\[6pt]
u(x,y,0) = \frac34 + \frac 12\sin(\pi(x+y)),
& (x,y)\in\bar{\Omega}, \\[6pt]
u(x,y,t) = g(x,y,t),
& (x,y)\in\Gamma,\quad t>0,
\end{cases}
\end{equation}
where
\begin{equation}\label{2D_Wave_Disk}
\Omega = \{(x,y)\in\mathbb R^2 : x^2+y^2<\frac 12\},
\qquad
\Gamma = \{(x,y)\in\partial\Omega : x+y\le0\}.
\end{equation}
The inflow boundary data are given by $g(x,y,t)=u_\star(x,y,t)$,
where $u_\star$ denotes the entropy solution of the corresponding
Cauchy problem on $\mathbb R^2$ with the same initial profile.

The background mesh is constructed on the square
$[-1.2/\sqrt{2},1.2/\sqrt{2}]^2$, with mesh size
$h=2.4/(\sqrt{2}N)$ for $N=16,32,64,128$.
Figure~\ref{fig:burgers_accuracy} shows that both the DG and OFDG
methods achieve the expected second-, third-, and fourth-order
convergence rates for $k=1,2,3$, respectively, at $T=0.1$.
In the presence of discontinuities, the OFDG method effectively
captures shocks without noticeable spurious oscillations,
as shown in Figure~\ref{fig:ofdg_shock}.

\begin{figure}[htbp]
\centering
\includegraphics[width=\linewidth]{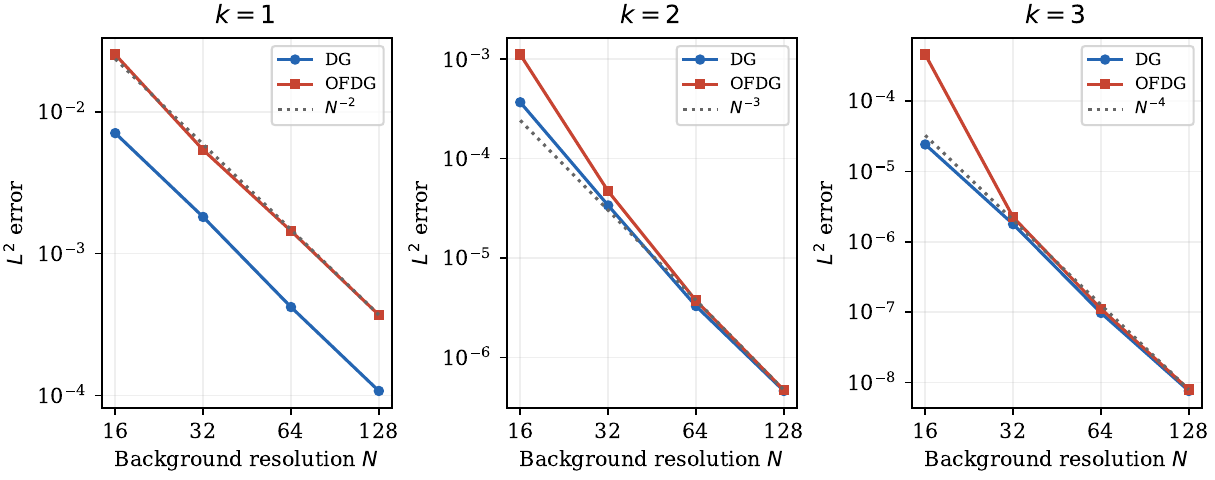}
\caption{Smooth DG/OFDG convergence at $T=0.1$.}\label{fig:burgers_accuracy}
\end{figure}

\begin{figure}[htbp]
\centering
\includegraphics[width=0.3\linewidth]{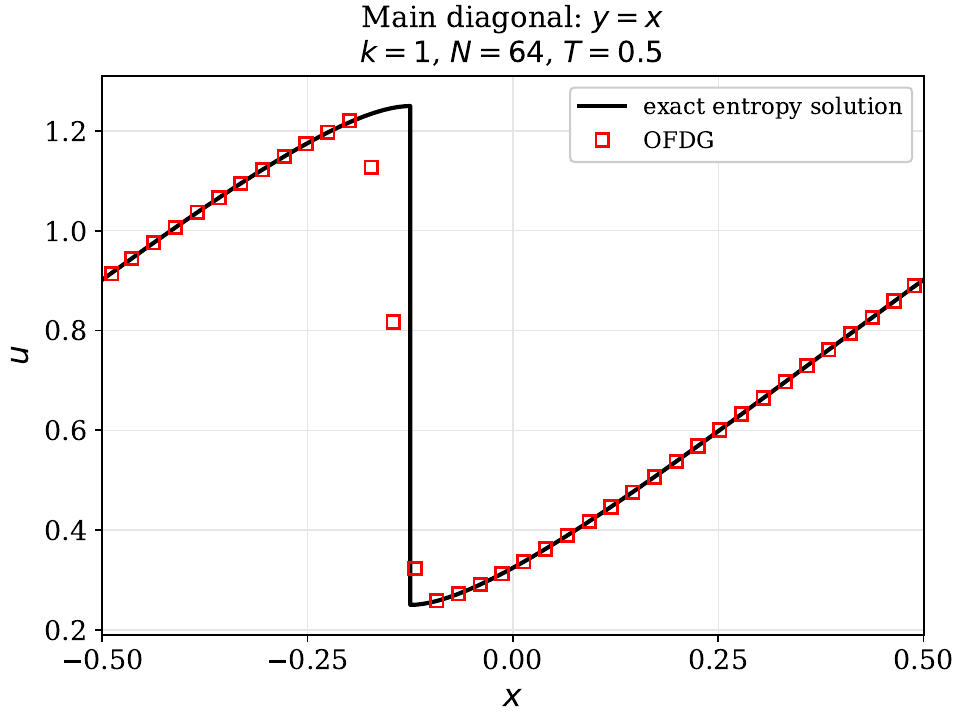
}
\includegraphics[width=0.3\linewidth]{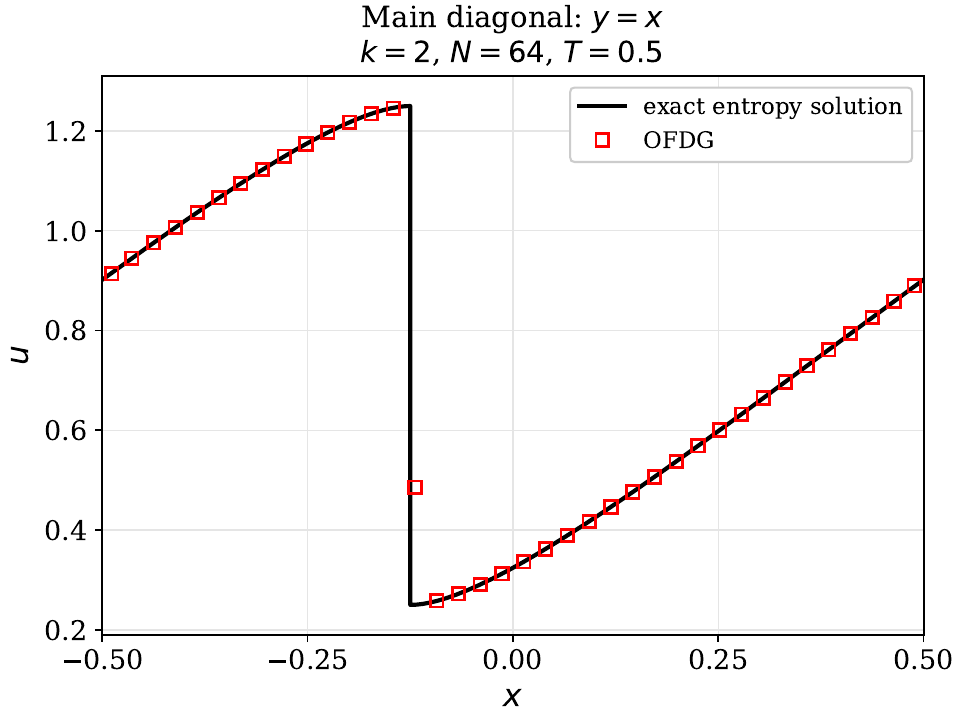
}
\includegraphics[width=0.3\linewidth]{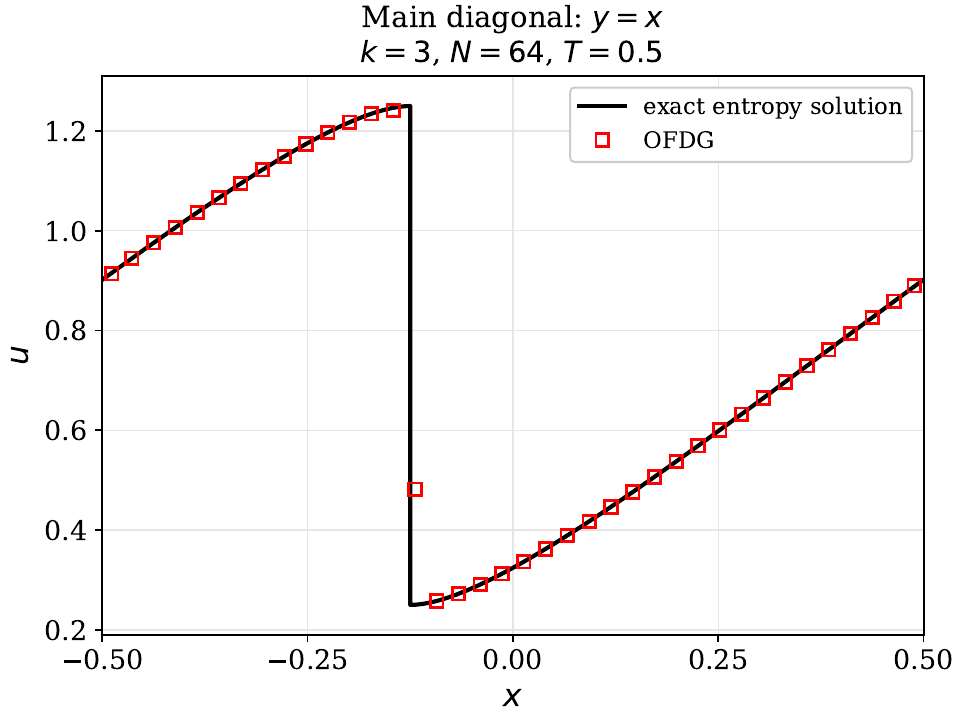
}
\caption{Shock diagonal cuts for $k=1,2,3$(from left to right), $N=64$, $T=0.5$.}\
\label{fig:ofdg_shock}
\end{figure}

\subsection{Linear advection on a smooth five-pointed star}
\label{subsec:linear-star}
We next consider $u_t+u_x+u_y=0$ on the domain:
\begin{equation}\label{eq:star-domain}
 \begin{split}
 \Omega&=\{(r\cos\theta,r\sin\theta):0\le r<R(\theta),\quad
                   0\le\theta<2\pi\},\\
 R(\theta)&=\frac29\bigl(3+4^{\sin(5\theta)}\bigr).
 \end{split}
\end{equation}
Figure~\ref{fig:star-mesh} shows the merged mesh and its concave boundary.
\begin{figure}[tb]
\centering
\includegraphics[width=\textwidth]{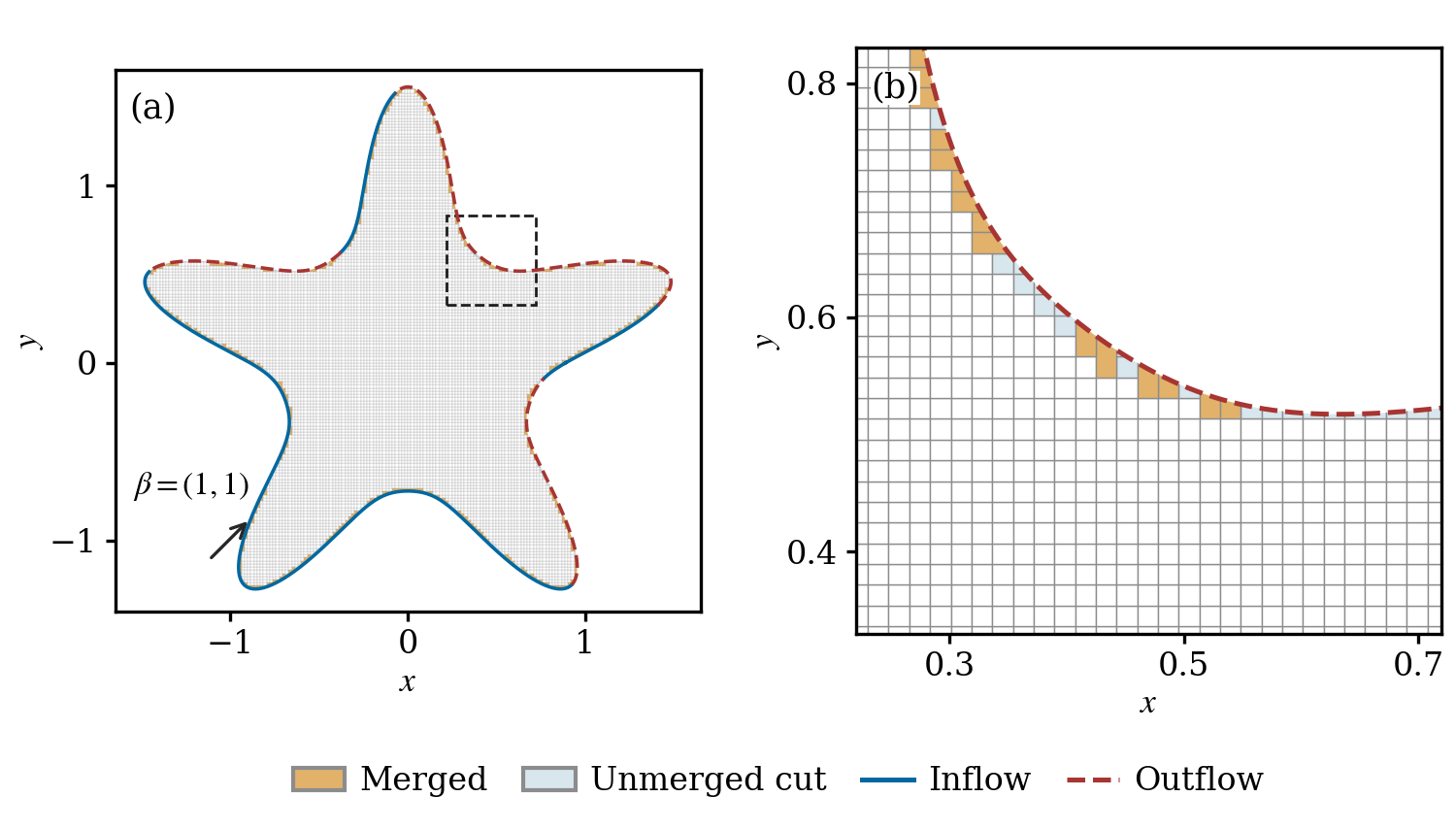}
\caption{Star domain with $N=226$: (a) merged mesh and (b) enlargement
of the boxed region. Solid blue and dashed
red curves indicate inflow and outflow, respectively.}
\label{fig:star-mesh}
\end{figure}
Set $\boldsymbol\beta=(1,1)$ and $T=0.1$. The exact solution is
\begin{equation}\label{eq:star-exact}
 u(t,x,y)=\frac54\sin\bigl(\pi(x+y-2t)\bigr)
                +\frac14\sin\bigl(\pi(y-x)\bigr).
\end{equation}
The second term is stationary, and the first travels in the
direction $\boldsymbol\beta$. We prescribe its trace $g$ on
$\Gamma_-:=\{\boldsymbol\beta\cdot\mathbf n<0\}$ and impose no data
on outflow. With $b=\boldsymbol\beta\cdot\mathbf n$, the physical boundary
flux is $b^+u_h+b^-g$, where $b^+=\max(b,0)$ and
$b^-=\min(b,0)$.
On interior faces, the flux is
$b_F\{u_h\}+\nu_F[u_h]/2$, and the coefficient in $S_h$ is
the constant $b_F=\boldsymbol\beta\cdot \mathbf{n}_F$.
This test has nonzero time-dependent inflow; its semidiscrete
energy balance is covered by Theorem~\ref{thm:linear-inflow}.

The box $[-2,2]^2$ is divided into $N^2$ squares, so $h=4/N$.
We use $N=226,320,452,640$, for which the admissibility conditions
of the merging construction hold. These grids contain
$11138,22296,44370,88890$ physical cells, respectively.
The successive refinement ratios are close to $\sqrt2$;
all rates in Table~\ref{tab:star-errors} use the actual ratios in
\eqref{eq:numerical-rate}.
At $N=320$, the smallest original intersection has area
$5.51\times10^{-8}h^2$ and lies in a merged physical cell of
area $1.53h^2$. The smallest final physical cell on this mesh
has area $0.02431h^2$.

\begin{table}[tb]
\centering\small\setlength{\tabcolsep}{4pt}
\caption{Linear advection on the star: $L^2$ errors at $T=0.1$.
Orders use the actual background mesh ratios.}
\label{tab:star-errors}
\begin{tabular}{r rr rr rr}
\toprule
&\multicolumn{2}{c}{$k=1$}&\multicolumn{2}{c}{$k=2$}&\multicolumn{2}{c}{$k=3$}\\
$N$&Error&Order&Error&Order&Error&Order\\
\midrule
226 &$5.9156\times10^{-4}$&--    &$2.4486\times10^{-6}$&--    &$1.7888\times10^{-8}$&--\\
320 &$2.9414\times10^{-4}$&2.009 &$9.1623\times10^{-7}$&2.827 &$4.5599\times10^{-9}$&3.930\\
452 &$1.4578\times10^{-4}$&2.032 &$3.2312\times10^{-7}$&3.018 &$1.0955\times10^{-9}$&4.129\\
640 &$7.2309\times10^{-5}$&2.016 &$1.0817\times10^{-7}$&3.146 &$2.6304\times10^{-10}$&4.102\\
\bottomrule
\end{tabular}
\end{table}

The step is $\tau=T/\lceil T/(c_k^{\mathrm{time}}h^{\max(1,(k+1)/3)})\rceil$,
with $c_1^{\mathrm{time}}=c_2^{\mathrm{time}}=0.08$ and
$c_3^{\mathrm{time}}=0.04$.
Inflow data are evaluated at the RK stage times
$t_n$, $t_n+\tau$, and $t_n+\tau/2$.
The observed endpoint orders are $2.019$, $2.997$, and $4.054$.
Doubling the angular quadrature from $24$ to $48$ points for
$k=3$, $N=226$ changes the solution by $2.5624\times10^{-13}$.
Boundary quadrature intervals are split at grid crossings and at
changes of sign of $\boldsymbol\beta\cdot \mathbf n$.

\section{Concluding Remarks}\label{sec:concluding}
In this paper, we develop an unfitted discontinuous Galerkin method
for scalar hyperbolic conservation laws on curved domains. The method
employs a uniform Cartesian background mesh and a cell-merging
algorithm to address the small-cut-cell problem. To achieve
optimal-order error estimates on meshes with curved boundaries,
the semidiscrete formulation combines an entropy-conservative flux
with face viscosity and, for odd polynomial degrees, incorporates
an additional skew derivative-jump correction on regular faces.
The error analysis relies on the physical-cell $L^2$ projection
and the cancellation of projection-error contributions on regular
faces. The analysis also covers linear transport problems with
an upwind treatment of inflow boundary conditions. Our ongoing
work focuses on extending this framework to three-dimensional
problems and hyperbolic systems.

\section*{Declaration of Generative AI Use}
During the preparation of this manuscript, the author used ChatGPT
(OpenAI) to assist with language editing and revision of the \LaTeX{}
source. The author reviewed the resulting content and takes full
responsibility for the final manuscript.

\FloatBarrier
\bibliographystyle{amsplain}
\providecommand{\bysame}{\leavevmode\hbox to3em{\hrulefill}\thinspace}
\providecommand{\MR}{\relax\ifhmode\unskip\space\fi MR }
\providecommand{\MRhref}[2]{%
  \href{http://www.ams.org/mathscinet-getitem?mr=#1}{#2}
}
\providecommand{\href}[2]{#2}

\end{document}